\pdfoutput=1
\documentclass[11pt,oneside,reqno]{amsart}

\usepackage{amsmath, amssymb, amsthm, amsfonts, mathrsfs, mathtools}
			
\setbox0=\hbox{$+$}
\newdimen\plusheight
\plusheight=\ht0
\def\+{\;\lower\plusheight\hbox{$+$}\;}

\newdimen\minusheight
\minusheight=\ht0
\def\-{\;\lower\minusheight\hbox{$-$}\;}

\setbox0=\hbox{$\cdots$}
\newdimen\cdotsheight
\cdotsheight=\plusheight
\def\cds{\lower\cdotsheight\hbox{$\cdots$}}
\usepackage{tikz}
\usetikzlibrary{matrix,decorations.pathreplacing}

\newtheorem{question}{Question}
\newtheorem{conjecture}{Conjecture}

\theoremstyle{definition}

\usepackage{graphicx, epsfig}

\theoremstyle{definition}

\makeatletter
\renewcommand\contentsnamefont{\bfseries}
\def\@starttoc#1#2{\begingroup
\setTrue{#1}%
\par\removelastskip\vskip\z@skip
\@startsection{}\@M\z@{\linespacing\@plus\linespacing}%
{.5\linespacing}{
\contentsnamefont}{#2}%
\ifx\contentsname#2%
\else \addcontentsline{toc}{section}{#2}\fi
\makeatletter
\@input{\jobname.#1}%
\if@filesw
\@xp\newwrite\csname tf@#1\endcsname
\immediate\@xp\openout\csname tf@#1\endcsname \jobname.#1\relax
\fi
\global\@nobreakfalse \endgroup
\addvspace{32\p@\@plus14\p@}%
\let\tableofcontents\relax
}
\def\contentsname{Contents}
\def\l@section{\@tocline{2}{.5ex}{0mm}{5pc}{}}
\def\l@subsection{\@tocline{2}{0pt}{2em}{5pc}{}}
\def\l@subsubsection{\@tocline{2}{0pt}{3em}{5pc}{}}
\def\l@subsubsubsection{\@tocline{2}{0pt}{4em}{5pc}{}}
\makeatother

\usepackage[strict]{changepage}
\pagestyle{plain}
\pagestyle{myheadings}
\makeatother

\numberwithin{equation}{section}
\theoremstyle{plain}
\newtheorem{theorem}{Theorem}[section]

\newtheorem{corollary}[theorem]{Corollary}
\newtheorem{proposition}[theorem]{Proposition}
\newtheorem{definition}[theorem]{Definition}
\newtheorem{lemma}[theorem]{Lemma}
\newtheorem{claim}[theorem]{Claim}
\usepackage{titlesec}

\usepackage{graphicx, epsfig}
\theoremstyle{definition}
\usepackage{graphicx} 
\usepackage{color} 
\usepackage{amsthm}
\usepackage{changepage}
\usepackage{blkarray}
\usepackage{transparent} 
\usepackage{titlesec}
\usepackage{changepage}
\usepackage{lipsum}
\setlength\abovecaptionskip{-0.5cm}

\makeatletter

\titleformat{\section}{\normalfont\LARGE\bfseries}
\titleformat{\subsubsection}{\normalfont\large\bfseries}

\renewcommand\section{\@startsection{section}{1}{\z@}%
                                  {-3.5ex \@plus -1ex \@minus -.2ex}%
                                 {2.3ex \@plus.2ex}%
                                 {\normalfont\Large\bfseries}}
\renewcommand\subsection{\@startsection{subsection}{1}{\z@}%
                                  {-3.5ex \@plus -1ex \@minus -.2ex}%
                                 {2.3ex \@plus.2ex}%
                                 {\normalfont\normalsize\bfseries}}
\renewcommand\subsubsection{\@startsection{subsubsection}{1}{\z@}%
                                  {-3.5ex \@plus -1ex \@minus -.2ex}%
                                 {2.3ex \@plus.2ex}%
                                 {\normalfont\normalsize\bfseries}}
\titlespacing*{\paragraph}
{0pt}{3.25ex plus 1ex minus .2ex}{1.5ex plus .2ex}

\usepackage{titlesec}
\usepackage{hyperref}

\titleclass{\subsubsubsection}{straight}[\subsection]

\newcounter{subsubsubsection}[subsubsection]
\renewcommand\thesubsubsubsection{\thesubsubsection.\arabic{subsubsubsection}}

\newcommand{\newparallel}{\mathrel{\mathpalette\new@parallel\relax}}
\newcommand{\new@parallel}[2]{%
  \begingroup
  \sbox\z@{$#1T$}
  \resizebox{!}{\ht\z@}{\raisebox{\depth}{$\m@th#1/\mkern-5mu/$}}%
  \endgroup}
\makeatletter
\newcommand{\notparallel}{%
  \mathrel{\mathpalette\not@parallel\relax}%
}
\newcommand{\not@parallel}[2]{%
  \ooalign{\reflectbox{$\m@th#1\smallsetminus$}\cr\hfil$\m@th#1\newparallel$\cr}%
}
\titleformat{\subsubsubsection}
  {\normalfont\normalsize}{\thesubsubsubsection}{1em}{}
\titlespacing*{\subsubsubsection}
{0pt}{3.25ex plus 1ex minus .2ex}{1.5ex plus .2ex}
\pagestyle{plain}
\setcounter{page}{1}     
\makeatother
\usepackage{graphicx} 
\usepackage{color} 
\usepackage{transparent} 
\usepackage{geometry} 
\geometry{
a4paper,
 total={170mm,257mm},
 top=40mm,
 bottom=40mm,
 left=30mm,
 right=30mm,
 }
 
\begin{document}

\title{On anomalous subvarieties of holonomy varieties of hyperbolic $3$-manifolds} 
\author{BoGwang Jeon}

\begin{abstract}
The goal of this paper is to explore the interplay between two seemingly distinct fields. More precisely, let $\mathcal{M}$ be an $n$-cusped hyperbolic $3$-manifold with rationally independent cusp shapes, and $\mathcal{X}$ be its holonomy variety. We study the structure of anomalous subvarieties of $\mathcal{X}$, a concept originating in arithmetic geometry, and relate it to various geometric properties of $\mathcal{M}$.

First, we show that every maximal anomalous subvariety of $\mathcal{X}$ containing the identity is its subvariety of codimension $1$ which arises by keeping one cusp of $\mathcal{M}$ complete. 

Second, we show that, if $\mathcal{X}$ is degenerated by its anomalous subvarieties (i.e., $\mathcal{X}^{oa}=\emptyset$), then $\mathcal{M}$ has cusps which are, while keeping some other cusps of it complete, strongly geometrically isolated from the rest. 

Finally, we completely classify and characterize the case $\mathcal{X}^{oa}=\emptyset$ for the holonomy variety $\mathcal{X}$ of any $2$-cusped hyperbolic $3$-manifold.
\end{abstract} 
\maketitle
\tableofcontents

\section{Introduction}
\subsection{The motivating conjecture}
Before stating the main results, we will briefly review some motivating questions, theorems, and conjectures related to our research. As the target readers of this paper are primarily topologists, this is mainly to provide them with some familiarity with the topic `\textit{anomalous subvariety}' given in the title, which belongs to a different field. However, as readers may understand, covering a wide range of the topic in a couple of paragraphs is nearly impossible, and the task would never be complete. Instead, for more detailed information about the topic, we would like to recommend a nice introductory book by U. Zannier \cite{zan}. 

Let $\mathbb{G}^n:=(\overline{\mathbb{Q}}^*)^n$ or $(\mathbb{C^*})^n$. By an \textit{algebraic subgroup} $H$ in $\mathbb{G}^n(:=(X_1, \dots, X_n))$, we mean the set of solutions of equations of type $X_1^{a_1}\cdots X_n^{a_n}=1$ where $a_i\in \mathbb{Z}$ for $1\leq i\leq n$. An \textit{algebraic coset} is defined to be a translate $gH$ of some algebraic subgroup $H$ by some $g \in \mathbb{G}^n$. In particular, if $g$ is a torsion point(that is, a point whose coordinates are roots of unity), $gH$ is referred to as a \textit{torsion coset}. 

In the 1960s, S. Lang proposed the following question \cite{lang}:
\begin{question}[Lang]\label{23080701}
If $\mathcal{X}$ is an irreducible algebraic curve in $\mathbb{G}^2$ containing infinitely many torsion points, what can be said about $\mathcal{X}$?
\end{question}

First, we note the condition imposed on $\mathcal{X}$ is highly unusual. Since a torsion point in $\mathbb{G}^2$ is defined by two equations by definition, its containment in a plane curve $\mathcal{X}$ implies the point is an intersection point of three distinct equations in $\mathbb{G}^2$, which is very unlikely.  

The above question was soon answered by Ihara, Serre, and Tate as follows \cite{lang}:
\begin{theorem}[Ihara-Serre-Tate (first version)]\label{23080301}
If $\mathcal{X}\subset \mathbb{G}^2(:=(X_1, X_2))$ contains infinitely many torsion points, then $\mathcal{X}$ is a torsion coset. In other words, $\mathcal{X}$ is defined by an equation of the following form $X_1^{a_1}X_2^{a_2}=\zeta$ where $\zeta$ is a root of unity. 
\end{theorem}

In other words, a generic algebraic curve contains only finitely many torsion points. 

The question mentioned above, along with the theorem that followed, has stimulated extensive subsequent research, becoming one of the central topics in number theory \cite{pila,zan}. One natural consideration is the generalization of Question \ref{23080701} to higher-dimensional algebraic varieties. However, in this case, the problem turned out to be far more intricate and necessarily required the incorporation of methodologies from algebraic geometry. The final formulation of the theorem, which took a few more decades to be settled, is stated below. It was proven by M. Laurent \cite{lau} and independently later by Sarnak-Adams \cite{SA}:	
\begin{theorem}[Laurent, Sarnak-Adams (first version)]\label{23080303}
Let $\mathcal{X}$ be an algebraic variety in $\mathbb{G}^n$. Then the set of torsion points in $\mathcal{X}$ all lie and are Zariski dense in a finite number of torsion cosets contained in $\mathcal{X}$. 
\end{theorem}

That is to say, if the torsion points are Zariski dense over $\mathcal{X}$, then $\mathcal{X}$ itself is a torsion coset. Otherwise, they lie along a finite number of proper subvarieties of $\mathcal{X}$, each contained in a torsion coset. In some ways, Theorem \ref{23080303} is faithful to the spirit of the Bombieri-Lang conjecture, saying the set of rational points on an algebraic variety of general type is not Zariski dense but contained in its proper algebraic subvarieties. 

Note that a torsion point is an algebraic subgroup of dimension $0$. Thus, the existence of a torsion point on a given variety $\mathcal{X}$ is, in a certain sense, regarded as a highly stringent requirement. We therefore slightly relax this criterion and consider the scenario where $\mathcal{X}$ intersects with an algebraic subgroup of positive dimension. To further explain this, we define the following, which is due to Bombieri-Masser-Zannier \cite{za}: 

\begin{definition} \label{20040802}
Let $\mathcal{X}$ be an algebraic variety and $H$ be an algebraic coset in $\mathbb{G}^n$. An irreducible component $\mathcal{Y}$ of $\mathcal{X}\cap H$ is called an anomalous subvariety of $\mathcal{X}$ if it is of positive dimension and satisfies
\begin{equation}\label{20031301}
\dim \mathcal{Y}>\dim H +\dim \mathcal{X}-n.
\end{equation}
In particular, if $H$ is a torsion coset, then $\mathcal{Y}$ is called a torsion anomalous subvariety of $\mathcal{X}$. Also $\mathcal{Y}$ is said to be maximal if it is not contained in a strictly larger anomalous subvariety of $\mathcal{X}$.
\end{definition}

By a standard fact in intersection theory (for instance, see Proposition 3.28 in \cite{mum}), we always have
\begin{equation*}
\dim \mathcal{Y}\geq \dim H+\dim \mathcal{X} -n
\end{equation*} 
and, generically, the equality holds when both $\mathcal{X}$ and $H$ are in general position. Hence an anomalous subvariety of $\mathcal{X}$ arises as an exceptional phenomenon, occurring when $\mathcal{X}$ intersects with an algebraic coset of $\mathbb{G}^n$ in an unexpected way. 

For $\dim \mathcal{X}=1$, since the only non-trivial subvariety of $\mathcal{X}$ is $\mathcal{X}$ itself, $\mathcal{X}$ is anomalous if and only if it is contained in some algebraic coset. Also if an algebraic coset $H$, as described in Theorem \ref{23080303}, is entirely contained in $\mathcal{X}$, then $\mathcal{X}\cap H=H$, indicating any component $\mathcal{Y}$ of $\mathcal{X}\cap H$ satisfies \eqref{20031301} so is an anomalous subvariety of $\mathcal{X}$. However, it is generally challenging to precisely determine the anomalous subvarieties of a given algebraic variety, unless the variety is of simple type.\footnote{For instance, if a variety is given as the product of two other varieties, then it contains infinitely many anomalous subvarieties. See Proposition \ref{20041602}.} To some extent, the above definition is of a conceptual nature rather than being practically applicable, similar to many other definitions in algebraic geometry. 

Employing Definition \ref{20040802}, Theorems \ref{23080301}-\ref{23080303} are rephrased as follows respectively:
\begin{theorem}[Ihara-Serre-Tate (second version)]\label{23080801}
If $\mathcal{X}\subset \mathbb{G}^2(:=(X_1, X_2))$ contains infinitely many torsion points, then $\mathcal{X}$ itself is torsion anomalous. 
\end{theorem}

\begin{theorem}[Laurent, Sarnak-Adams (second version)]\label{23080802}
Let $\mathcal{X}$ be an algebraic variety in $\mathbb{G}^n$. Then, possibly except for finitely many, the set of torsion points in $\mathcal{X}$ lies along a finite number of torsion anomalous subvarieties of $\mathcal{X}$. 
\end{theorem}

Given the restatements above, it now becomes evident that, in order to achieve a far broader generalization of Theorems  \ref{23080801}-\ref{23080802}, one needs to make use of the notion of a `torsion anomalous subvariety' as defined in Definition \ref{20040802}. In particular, the generalization must also encompass the examination of the distribution of torsion anomalous subvarieties within a given variety. After undergoing several refinements, the final version, which synthesizes numerous previously established outcomes including the above ones, is conjecturally formulated as follows:

\begin{conjecture}[Zilber-Pink]\label{23080307}
For every irreducible variety $\mathcal{X}(\subset \mathbb{G}^n)$ defined over $\overline{\mathbb{Q}}$, there exists a finite set $\mathcal{T}$ of proper algebraic subgroups such that, for every algebraic subgroup $H$ and every component $\mathcal{Y}$ of $\mathcal{X}\cap H$ satisfying 
\begin{equation*}
\dim \mathcal{Y}>\dim H+\dim \mathcal{X} -n, 
\end{equation*} 
one has $\mathcal{Y}\subset T $ for some $T\in \mathcal{T}$. 
\end{conjecture}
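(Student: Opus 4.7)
The plan is to deploy the Pila--Zannier strategy, which has produced the sharpest available results on unlikely intersections in $\mathbb{G}^n$. The goal is to construct a finite collection $\mathcal{T}$ of proper algebraic subgroups absorbing every atypical component of every intersection $\mathcal{X}\cap H$. First I would reduce to torsion anomalous subvarieties: a standard translation argument shows that if $\mathcal{Y}\subset \mathcal{X}\cap gH$ is anomalous, then after translating along $H$ and base-changing one may assume $g$ is torsion, so it suffices to handle the union $\mathcal{X}^{ta}$ of torsion anomalous components. One further reduces, by a Bertini-type argument, to the case where $H$ has codimension at least $\dim\mathcal{X}+1$, since intersections with subgroups of smaller codimension are generically of expected dimension.

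The technical core is a three-step loop. Step one is a \emph{height upper bound}: following Bombieri--Masser--Zannier and its higher-dimensional extensions by Habegger, one seeks to show that the union of anomalous components $\mathcal{Y}\subset \mathcal{X}$ consists, outside a proper Zariski-closed subset, of points of bounded N\'eron--Tate height. Step two is a \emph{Galois-orbit lower bound}: each torsion coset containing such a $\mathcal{Y}$ has a number of Galois conjugates growing as a positive power of the order of the torsion involved. Step three is the \emph{Pila--Wilkie counting theorem}, applied inside a fundamental domain in $\mathbb{C}^n$ for the covering $\exp\colon\mathbb{C}^n\to \mathbb{G}^n$, which is definable in the o-minimal structure $\mathbb{R}_{\mathrm{an,exp}}$. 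Combined with the functional Ax--Schanuel theorem of Ax for $\mathbb{G}_m^n$, which rules out unexpected algebraic dependencies among the logarithms, the Pila--Wilkie bound forces the preimage of $\mathcal{X}^{ta}$ to lie in finitely many proper rational affine subspaces; pushing forward produces the required finite set $\mathcal{T}$ of proper algebraic subgroups.

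The main obstacle is step one. The height upper bound in the precise form needed here, uniform across all algebraic subgroups $H$ of arbitrary codimension and for $\mathcal{X}$ of arbitrary dimension, is essentially equivalent to Zilber--Pink itself and is at present known only in restricted settings: for curves in $\mathbb{G}^n$ by Maurin (building on Bombieri--Masser--Zannier), for intersections of $\mathcal{X}$ with subgroups of codimension exactly $\dim\mathcal{X}+1$ by Habegger, and for a handful of special families of $\mathcal{X}$. My plan therefore reduces the full statement to sharpening Habegger's bounded-height theorem to all codimensions simultaneously, which is the principal unresolved difficulty in the area and where genuinely new input would need to appear; everything else in the Pila--Zannier pipeline can then be invoked essentially formally to produce the required finite family $\mathcal{T}$.
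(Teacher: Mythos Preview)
The statement you are attempting to prove is labelled in the paper as a \emph{conjecture}, not a theorem; the paper offers no proof of it whatsoever. The Zilber--Pink conjecture for $\mathbb{G}^n$ in the generality stated remains open, and the paper merely quotes it as background before establishing the very special case where $\mathcal{X}$ is the A-polynomial of a $2$-cusped hyperbolic $3$-manifold (Theorem~\ref{19090102}). There is therefore nothing in the paper against which to compare your proposal.

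As for the proposal itself, you have accurately summarised the Pila--Zannier architecture and, crucially, you concede in your final paragraph that step one---the uniform height bound across all codimensions---is ``essentially equivalent to Zilber--Pink itself.'' That is not a gap so much as an admission that the argument is circular: you have reduced the conjecture to a reformulation of the conjecture. Everything you write about the o-minimal counting, Ax--Schanuel, and Galois lower bounds is correct in spirit, but none of it constitutes progress on the actual obstruction. If your intention was to prove the general conjecture, this is not a proof; if your intention was to address what the paper actually establishes, you should instead be looking at the special structure of A-polynomials (the Neumann--Zagier potential, the classification of anomalous subvarieties in Theorem~\ref{19072301}, and the SGI dichotomy of Theorem~\ref{20042301}) that allows the $2$-cusped case to go through.
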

Note that it is assumed $\dim \mathcal{Y}>0$ in Definition \ref{20040802}, but $\dim\mathcal{Y}=0$ is allowed in the Zilber-Pink conjecture (thus covering Theorems \ref{23080801}-\ref{23080802}).

The Zilber-Pink conjecture was proved for the curve case by G. Maurin \cite{mau}, varieties of codimension $2$ by Bombieri-Masser-Zannier \cite{za, BMZ1}, but is widely open for other cases \cite{pila, zan}.

The conjecture can be extended further to other contexts. For instance, we may replace $\mathbb{G}^n$ with an abelian variety, and ask a corresponding question. Implications of the conjecture have now spanned beyond the realms of number theory and algebraic geometry, touching on various other fields including logic and topology \cite{jeon5,jeon6,pila,zan,zil}. We will elaborate on this later from the perspective of 3-dimensional geometry and topology; see Section \ref{23080305}.

\subsection{Main results}\label{23081503}
To address the Zilber-Pink conjecture for $\mathcal{X}$, it is often essential to comprehend not only the underlying structure of torsion anomalous subvarieties but also that of the entire anomalous subvarieties in $\mathcal{X}$ (for instance, see Theorem \ref{19082401} below). The goal of this paper is to investigate the anomalous subvarieties of a special type of algebraic varieties, originating from low-dimensional topology, known as holonomy varieties of hyperbolic $3$-manifolds. The \textit{holonomy variety} $\mathcal{X}$ of a cusped hyperbolic $3$-manifold $\mathcal{M}$ is defined as the representation variety of the fundamental group of $\mathcal{M}$, with a particular choice of coordinates related to the geometric structures of the cusps of $\mathcal{M}$. The subject has been studied in great detail, as it provides much topological information about $\mathcal{M}$.

In this paper, we investigate the structure of the anomalous subvarieties of $\mathcal{X}$. Particularly, by imposing a certain condition on $\mathcal{X}$, we identify a necessary condition for $\mathcal{M}$ to degenerate $\mathcal{X}$ via its anomalous subvarieties, a condition that is related to a well-known geometric concept in the field. 

Our first main result is the following. Throughout the paper, the term `\textit{hyperbolic 3-manifold}' always refers to an orientable, complete hyperbolic 3-manifold of finite volume.
 
\begin{theorem}\label{19072301}
Let $\mathcal{M}$ be an $n$-cusped ($n\geq 2$) hyperbolic $3$-manifold having rationally independent cusp shapes and $\mathcal{X}$ be its holonomy variety. Then a maximal anomalous subvariety of $\mathcal{X}$ containing the identity is its subvariety of codimension $1$ obtained by keeping one cusp of $\mathcal{M}$ complete.  
\end{theorem}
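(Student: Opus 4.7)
The plan is to reduce the problem, via the Neumann--Zagier parametrization of $\mathcal{X}$ at the complete structure, to a rank condition on an integer matrix twisted by the cusp shapes, and then to use the rational independence hypothesis to force the ambient subgroup to be of ``cusp-complete'' form.

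Let $\mathcal{Y}$ be a maximal anomalous subvariety of $\mathcal{X}$ containing the identity $e$. Since $e\in\mathcal{Y}$, the ambient algebraic coset is actually a subgroup $H\subset\mathbb{G}^{2n}$; set $s=\operatorname{codim}H$, so that the anomaly inequality becomes $\dim\mathcal{Y}>n-s$. By Neumann--Zagier, near $e$ the variety $\mathcal{X}$ is locally parametrized by $u=(u_1,\ldots,u_n)=(\log M_1,\ldots,\log M_n)$ with $v_i:=\log L_i=\tau_i u_i+O(u^3)$, whence $T_e\mathcal{X}=\{(\dot u,\dot v)\in\mathbb{C}^{2n}:\dot v_i=\tau_i\dot u_i\}$. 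If the monomial equations defining $H$ are $\prod_i M_i^{a_i^{(k)}}L_i^{b_i^{(k)}}=1$ for $k=1,\ldots,s$, then parametrizing $T_e\mathcal{X}$ by $\dot u\in\mathbb{C}^n$ identifies $T_e\mathcal{X}\cap T_eH$ with $\ker C$, where $C=(a_i^{(k)}+b_i^{(k)}\tau_i)_{k,i}$ is the associated $s\times n$ matrix. Combining $T_e\mathcal{Y}\subset T_e\mathcal{X}\cap T_eH$ with $\dim T_e\mathcal{Y}\geq\dim\mathcal{Y}>n-s$ yields $\operatorname{rank}C<s$.

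The core step is to translate this $\mathbb{C}$-rank deficiency, coupled with the rational independence of $\tau_1,\ldots,\tau_n$, into a structural condition on the integer matrices $A=(a_i^{(k)})$ and $B=(b_i^{(k)})$. Any non-trivial $\lambda\in\ker C^{\top}$ yields the relations $A_i(\lambda)+B_i(\lambda)\tau_i=0$ for each cusp $i$, with $A_i(\lambda)=\sum_k\lambda_k a_i^{(k)}$ and $B_i(\lambda)=\sum_k\lambda_k b_i^{(k)}$. Running a cusp-by-cusp analysis for a basis of $\ker C^{\top}$, and using that $H$ has codimension exactly $s$, I would deduce that the equations defining $H$ are supported on a subset $S\subseteq\{1,\ldots,n\}$ of cusps and, after an integer change of basis of equations, reduce on each $i\in S$ to the pair $M_i=1,\ L_i=1$; in other words, $H=\bigcap_{i\in S}\{M_i=L_i=1\}$ and $s=2|S|$.

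For such an $H$, the component of $\mathcal{X}\cap H$ through $e$ is the $(n-|S|)$-dimensional deformation subvariety obtained by keeping the cusps indexed by $S$ complete, which is indeed anomalous since $n-|S|>n-2|S|$ whenever $|S|\geq1$. Maximality of $\mathcal{Y}$ then forces $|S|=1$: otherwise, fixing $i_0\in S$ and keeping only the pair of equations $M_{i_0}=L_{i_0}=1$ yields a strictly larger anomalous subvariety containing $\mathcal{Y}$. Conversely, each $\mathcal{X}\cap\{M_i=L_i=1\}$ is maximal, because any strictly larger irreducible subvariety of $\mathcal{X}$ would be $\mathcal{X}$ itself, which cannot be anomalous. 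I expect the main obstacle to be the structural deduction above: expanding an $s\times s$ minor of $C$ produces polynomial expressions in $\tau_1,\ldots,\tau_n$ involving monomials that $\mathbb{Q}$-linear independence of $\{1,\tau_1,\ldots,\tau_n\}$ alone does not directly control, so the argument will need to exploit the block pairing between $M_i$ and $L_i$ in the ambient coordinates and, if necessary, higher-order Neumann--Zagier data, in order to rule out spurious subgroups arising from accidental algebraic relations among products $\tau_{i_1}\cdots\tau_{i_k}$.
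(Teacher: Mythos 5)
You correctly set the problem up: since $e\in\mathcal{Y}$ the ambient coset may be taken to be a subgroup $H$; the Neumann--Zagier parametrization identifies $T_e\mathcal{X}\cap T_eH$ with $\ker C$ where $C=(a_i^{(k)}+b_i^{(k)}\tau_i)_{k,i}$; and the anomaly inequality becomes $\operatorname{rank}C<s=\operatorname{codim}H$. This is exactly how the paper begins. After that point, however, there are two genuine gaps.

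First, you have understated the hypothesis, and this is precisely what makes your ``main obstacle'' disappear. Definition~\ref{20033001} does not merely assume that $1,\tau_1,\ldots,\tau_n$ are $\mathbb{Q}$-linearly independent; it assumes that \emph{all} the products $\tau_{i_1}\cdots\tau_{i_l}$, $1\le i_1<\cdots<i_l\le n$, are $\mathbb{Q}$-linearly independent. Under that hypothesis, the strategy is forced: each column of $C$ is $\mathbf{a}_i+\tau_i\mathbf{b}_i$, so any minor of $C$ expands multilinearly into a $\mathbb{Q}$-linear combination of distinct monomials $\tau_{i_1}\cdots\tau_{i_k}$ whose coefficients are integer determinants $\det(\mathbf{u}_{i_1},\ldots,\mathbf{u}_{i_s})$ with each $\mathbf{u}_i\in\{\mathbf{a}_i,\mathbf{b}_i\}$. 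Rational independence of the monomials forces every such determinant to vanish, converting the complex rank condition into a purely rational one: every ``transversal'' choice of one column per cusp is $\mathbb{Q}$-linearly dependent. No higher-order Neumann--Zagier data is needed. Your alternate approach via $\lambda\in\ker C^{\top}$ is also unsound as written: a left-annihilator of $C$ is generally \emph{not} a rational vector (already for $n=s=2$ with $\operatorname{rank}C=1$, the proportionality factor between rows lies in $\mathbb{Q}(\tau_1,\tau_2)\setminus\mathbb{Q}$), so you cannot conclude $A_i(\lambda),B_i(\lambda)\in\mathbb{Q}$ and split off $\tau_i$ cusp by cusp.

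Second, your central structural claim is false. You assert that, up to integer row operations, $H=\bigcap_{i\in S}\{M_i=L_i=1\}$. Counterexample: $n=3$ and $H$ defined by $M_1=L_1=M_2L_3=1$. Here $C=\left(\begin{smallmatrix}1&0&0\\ \tau_1&0&0\\ 0&1&\tau_3\end{smallmatrix}\right)$ has rank $2<3$, $\mathcal{X}\cap H$ is a positive-dimensional anomalous subvariety through $e$, yet $H$ is not an intersection of cusp-complete subgroups and has codimension $3$, not even. The statement that is true (and sufficient for the theorem, and what Theorems~\ref{anomalous3} and~\ref{20040803} prove) is strictly weaker: $\mathcal{X}\cap H\subset\{M_i=L_i=1\}$ for some single $i$. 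Maximality of $\mathcal{Y}$ then forces $\mathcal{Y}=\mathcal{X}\cap\{M_i=L_i=1\}$, and this last set is itself maximal because its codimension in $\mathcal{X}$ is $1$. Your proposed maximality step (``drop all but one cusp block from the defining equations of $H$'') only makes sense if $H$ were block-diagonal in the stated sense, which it need not be. The technical engine the paper uses to bridge the rational rank condition to the conclusion $\mathcal{X}\cap H\subset\{M_i=L_i=1\}$ is the combinatorial Lemma~\ref{040201} about pairs $(\mathbf{v}_i,\mathbf{w}_i)$ in a rational vector space, together with Gaussian elimination and an induction on the number of defining equations (Theorem~\ref{anomalous3}) down to the codimension-$2$ base case of Lemma~\ref{17052301}; your proposal omits this core and supplies nothing in its place.
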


Please see Definition \ref{20033001} for the precise meaning of the assumption on $\mathcal{M}$. Roughly, the assumption captures the algebraic manifestation of `non-symmetries' among the cusps of $\mathcal{M}$. We believe that this condition is sufficiently general and it will be further treated in more detail in Section \ref{23080201}.  

If a cusp of $\mathcal{M}$ allows the complete hyperbolic metric, two coordinate functions over $\mathcal{X}$ associated to the cusp are fixed by $1$ and they together determine an anomalous subvariety of $\mathcal{X}$ of codimension $1$. This is an anomalous subvariety of the simplest type possessed by every holonomy variety. 

On the other hand, if there exists a symmetry between two cusps, or even more strongly, if a manifold has a self-isometry sending one cusp to another, then this naturally leads to another distinct type of an anomalous subvariety on its holonomy variety (see Section \ref{23080305} for more details). Consequently, in simpler terms, Theorem \ref{19072301} tells us that when there are `no symmetries' among the cusps of a manifold, the holonomy variety has only an anomalous subvariety of the simplest type. 

Following \cite{za}, let $\mathcal{X}^{oa}$ be the complement of the set of anomalous subvarieties of $\mathcal{X}$. Then one can make progress towards Conjecture \ref{23080307} as follows \cite{hab}:  


\begin{theorem}[Bombieri-Habegger-Masser-Zannier]\label{19082401}
Let $\mathcal{X}$ be an $r$-dimensional irreducible variety in $\mathbb{G}^{r+s}$ defined over $\overline{\mathbb{Q}}$. Then 
\begin{equation*}
\bigcup_{\dim H=s-1}\mathcal{X}^{oa}\cap H
\end{equation*}
is finite. 
\end{theorem}

By the work of Bombieri-Masser-Zannier, it is known that $\mathcal{X}^{oa}$ is a Zariski open subset of $\mathcal{X}$ (Theorem \ref{struc}). Thus if $\mathcal{X}^{oa}\neq \emptyset$ and further $\mathcal{X}$ has only finitely many maximal anomalous subvarieties $\{\mathcal{Y}_i\}_{1\leq i\leq n}$, the Zilber-Pink conjecture for $\mathcal{X}$ is reduced to the same conjecture over $\{\mathcal{Y}_i\}_{1\leq i\leq n}$. Of course $\mathcal{X}^{oa}=\emptyset $ is also possible and Theorem \ref{19082401} tells us nothing in this case. 

Our second main result, stated below, concerns the condition $\mathcal{X}^{oa} = \emptyset$ for the holonomy variety $\mathcal{X}$ of a cusped hyperbolic $3$-manifold $\mathcal{M}$. Specifically, it provides geometric criteria on $\mathcal{M}$ that must be satisfied for $\mathcal{X}^{oa} = \emptyset$ to hold.

\begin{theorem}\label{19082701}
Let $\mathcal{M}$ be an $n$-cusped ($n\geq 2$) hyperbolic $3$-manifold having rationally independent cusp shapes and $\mathcal{X}$ be its holonomy variety. If $\mathcal{X}^{oa}=\emptyset$, then there exist cusps of $\mathcal{M}$, keeping some other cusps complete, strongly geometrically isolated (SGI) from the remaining cusps of $\mathcal{M}$. 
\end{theorem}

The term \textit{strong geometric isolation} (SGI) was first introduced by W. Neumann and A. Reid in \cite{rigidity}, and later further studied by D. Calegari \cite{calegari, calegari2}. Simply put, it means there exists a set of cusps of a manifold that moves independently without affecting the remaining cusps. In this case, its holonomy variety $\mathcal{X}$ is represented as the product of two varieties of lower dimensions, that is, $\mathcal{X}=\mathcal{X}_1\times \mathcal{X}_2$, and so one easily finds $\mathcal{X}^{oa}=\emptyset$. (See Proposition \ref{20041602}.) However, $\mathcal{X}^{oa}=\emptyset$ does not necessarily mean SGI but instead a slightly broader one according to Theorem \ref{19082701}. 

We postpone the definitions of SGI and its generalization as well as further discussions around them until Section \ref{SGI}. 

Theorem \ref{19082701} is an extension of the following theorem proved by the author:
\begin{theorem} \cite{jeon1}\label{20033003}
Let $\mathcal{M}$ be a $2$-cusped hyperbolic $3$-manifold having rationally independent cusp shapes. Then $\mathcal{X}^{oa}=\emptyset$ if and only if two cusps of $\mathcal{M}$ are SGI to each other. 
\end{theorem}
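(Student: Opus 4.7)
The plan is to establish the two implications separately, with nearly all of the work concentrated in the only-if direction.

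For the if direction, suppose the two cusps are SGI. As noted in the paragraph preceding the theorem statement, this produces a product decomposition $\mathcal{X}=\mathcal{X}_1\times\mathcal{X}_2$ with each $\mathcal{X}_i\subset\mathbb{G}^2$, so both $\mathcal{X}_i$ are curves since $\dim\mathcal{X}=2$. Given any $p=(p_1,p_2)\in\mathcal{X}$, the slice $\mathcal{Y}:=\{p_1\}\times\mathcal{X}_2$ is a curve lying in the $2$-dimensional algebraic coset $K:=\{p_1\}\times\mathbb{G}^2$, and $\dim\mathcal{Y}=1>0=\dim K+\dim\mathcal{X}-n$ verifies the inequality (\ref{20031301}). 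Hence $\mathcal{Y}$ is an anomalous subvariety through $p$, so $\mathcal{X}^{oa}=\emptyset$.

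For the only-if direction, assume $\mathcal{X}^{oa}=\emptyset$. I would invoke the Bombieri--Masser--Zannier structure theorem to organize the maximal anomalous subvarieties of $\mathcal{X}$ into finitely many families whose union is all of $\mathcal{X}$. Since $\dim\mathcal{X}=2$ in $\mathbb{G}^4$, each such maximal anomalous subvariety is a curve $\mathcal{Y}$ lying in an algebraic coset $K$ of dimension $1$ or $2$. The key input is Theorem~\ref{19072301}: the maximal anomalous subvariety through the identity $(1,1,1,1)$ is of codimension one and arises from having a cusp complete, so it equals (after relabeling) $\{(1,1)\}\times\mathcal{C}$ for some curve $\mathcal{C}\subset\mathbb{G}^2$ in the $(M_2,L_2)$-coordinates. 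The heart of the argument is to propagate this local-at-identity picture to a global one. Using the rational independence of cusp shapes to exclude maximal anomalous subvarieties sitting in algebraic subgroups whose defining characters mix the two coordinate pairs nontrivially, together with the finiteness of families, I would show that every maximal anomalous subvariety has the form $\{p_1\}\times\mathcal{C}_{p_1}$ or $\mathcal{C}'_{p_2}\times\{p_2\}$. A rigidity argument then forces the fiber curves to be constant across each family, yielding the product decomposition $\mathcal{X}=\mathcal{X}_1\times\mathcal{X}_2$, which is equivalent to SGI.

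The main obstacle will be the propagation step. Theorem~\ref{19072301} pins down only the structure through the identity, whereas deducing a global product decomposition requires ruling out more exotic foliations. In principle, $\mathcal{X}$ could be covered by anomalous curves lying in ``tilted'' $2$-dimensional cosets whose defining characters mix both cusps, and one must use the rational independence of cusp shapes in an essential way to exclude this. A related subtlety arises when a maximal anomalous subvariety is itself a $1$-dimensional coset (so $\mathcal{Y}=K$): the combinatorics of how these cosets can tile $\mathcal{X}$ must be reconciled with the coordinate-aligned coset supplied by Theorem~\ref{19072301} at the identity.
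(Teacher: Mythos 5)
Your overall plan is right and lines up with how the paper implicitly establishes this theorem (via Theorem~\ref{20041602} for the ``if'' part and Theorem~\ref{17061701} for the ``only if'' part). The ``if'' direction as you wrote it is fine. However, the ``only if'' direction has a genuine gap precisely at the step you flag as ``the main obstacle,'' and you leave it unresolved rather than closing it.

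The missing idea is that the finiteness in the structure theorem, combined with irreducibility of $\mathcal{X}$, removes the need for any ``propagation'' or for ruling out tilted cosets away from the identity by hand. By Theorem~\ref{struc}, there is a \emph{finite} collection $\Psi$ of subgroups with $\mathcal{X}=\bigcup_{H\in\Psi} Z_H$; since $\mathcal{X}$ is irreducible and each $Z_H$ is closed, a single $H\in\Psi$ already satisfies $Z_H=\mathcal{X}$, and since $\mathcal{X}^{oa}=\emptyset$ forces $\dim\mathcal{X}\cap H>0$, this $H$ has codimension $2$. The identity lies in $Z_H$, so there is a maximal anomalous subvariety through $(1,1,1,1)$ that is a component of $\mathcal{X}\cap gH$; but a coset $gH$ containing the identity is equal to $H$, so this anomalous subvariety is $\mathcal{X}\cap H$, and Theorem~\ref{17062401} now pins $H$ down (up to a finite subgroup) to $M_i=L_i=1$ for some $i$. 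That is what globally rules out the ``tilted'' cosets: it is not that rational independence excludes them at each $g\in Z_H$ separately, but that a single subgroup serves all of $Z_H=\mathcal{X}$ and its shape is determined at the identity. Once you know $\mathcal{X}$ is foliated by $(M_1=\xi_1,\,L_1=\xi_2)\cap\mathcal{X}$ being curves for a Zariski-dense set of $(\xi_1,\xi_2)$, you immediately read off that $v_1$ depends only on $u_1$, which is SGI by Definition~\ref{isolation}; no separate ``rigidity argument'' for constancy of fiber curves is needed. With this step filled in, your outline becomes a complete proof that matches the paper's.
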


Since the condition on $\mathcal{M}$ described in Theorem \ref{19082701} or \ref{20033003} is generic, and manifolds exhibiting SGI phenomena on their cusps are quite rare \cite{calegari2}, it appears that $\mathcal{X}^{oa}\neq \emptyset$ is the most typical outcome, when $\mathcal{X}$ is given as the holonomy variety of a hyperbolic $3$-manifold. Moreover, we believe the condition on $\mathcal{M}$ in Theorem \ref{19082701} or \ref{20033003} may be removable; however, verifying this seems to be very difficult. Instead, we characterize $\mathcal{X}^{oa}=\emptyset$ for the $2$-cusped case completely as follows:  

\begin{theorem}\label{20042301}
Let $\mathcal{M}$ be a $2$-cusped hyperbolic $3$-manifold and $\mathcal{X}$ be its holonomy variety. If $\mathcal{X}^{oa}=\emptyset$, then either one of the following holds:
\begin{enumerate}
\item two cusps of $\mathcal{M}$ are SGI; or
\item there exists a two variable polynomial $f$ such that $\mathcal{X}$ is defined by 
\begin{equation}\label{200423041}
\begin{gathered}
f(M_1^{a}L_1^{b}M_2^{c}L_2^{d}, M_1^{md}M_2^{mb})=0, \quad f(M_1^{a}L_1^{b}M_2^{-c}L_2^{-d}, M_1^{md}M_2^{-mb})=0 
\end{gathered}
\end{equation}
for some $a,b,c,d\in \mathbb{Z}$ and $m\in \mathbb{Q}$ satisfying $mbd\neq 0$. 
\end{enumerate}
\end{theorem}
If two cusps of $\mathcal{M}$ are SGI, as mentioned earlier, $\mathcal{X}$ is the product of two algebraic curves. On the other hand, if $\mathcal{X}$ is given by equations as characterized in \eqref{200423041}, then it again becomes the product of two algebraic curves defined by $f(\tilde{M_1}, \tilde{L_1})=f(\tilde{M_2}, \tilde{L_2})=0$ in $\mathbb{G}^4(:=(\tilde{M_1}, \tilde{L_1}, \tilde{M_2}, \tilde{L_2}))$ under the following monoidal transformation:
\begin{equation*}
\tilde{M_1}:=M_1^{a}L_1^{b}M_2^{c}L_2^{d},\quad  \tilde{L_1}:=M_1^{md}M_2^{mb},\quad\tilde{M_2}:=M_1^{a}L_1^{b}M_2^{-c}L_2^{-d}, \quad \tilde{L_2}:=M_1^{md}M_2^{-mb}.
\end{equation*}
Consequently, $\mathcal{X}^{oa}=\emptyset$ implies the structure of $\mathcal{X}$ is directly or indirectly related to the product of two algebraic curves. As mentioned above, however, we doubt the existence of a manifold whose holonomy variety is of the form given in \eqref{200423041}, since this condition is highly restrictive. (In fact, as a byproduct of Theorem \ref{20042301}, we provide a straightforward criterion in Corollary \ref{23081005} that any manifold whose holonomy variety is not of the type given in \eqref{200423041} must satisfy. We have applied this criterion to more than a hundred manifolds in the SnapPy census \cite{CDGW}, and all of them satisfy it.)

Note that our main results provoke interesting topological consequences. For instance, using Theorems \ref{20042301}, we demonstrate a weak version of the Zilber-Pink conjecture for $\mathcal{X}\times \mathcal{X}$ and apply it to classify Dehn fillings of $\mathcal{M}$ \cite{jeon5,jeon6} (where $\mathcal{M}$ and $\mathcal{X}$ are the same as in Theorem \ref{20042301}). More generally, if one is able to fully resolve the Zilber-Pink conjecture for any holonomy variety as well as effectively determine the structure of its anomalous subvarieties, then it is expected that we can completely classify Dehn fillings of any cusped hyperbolic $3$-manifold. This would stand as the ultimate goal of the interplay between these two seemingly different fields, and the content of this paper should be acknowledged as a stepping stone towards this overarching theme. 

\subsection{Remarks on the proof strategy}

The general structure theorem, Theorem \ref{struc}, concerning anomalous subvarieties of arbitrary algebraic varieties, was established by Bombieri, Masser, and Zanier in [4]. We employ this theorem as a key tool in achieving our main results. Additionally, rather than directly considering the holonomy variety itself, we opt to work within an analytic framework. The approach involves taking the logarithm of each coordinate and is introduced in the context of the \textit{analytic holonomy set}, detailed in Section \ref{A-poly}. By adopting this perspective, we can fully leverage the local geometric characteristics it offers. For instance, the analytic holonomy set of a given hyperbolic $3$-manifold is recognized for exhibiting numerous symmetric properties, as outlined in Theorem \ref{potential}. These properties serve as a second key ingredient throughout the paper and are used in conjunction with Theorem \ref{struc}. Although the proofs rely primarily on relatively elementary ideas and techniques, chiefly from linear algebra, they are accompanied by a variety of interesting and new aspects which we believe were previously unexplored. 

\subsection{Acknowledgements}
We would like to thank Stephan Tillmann for many helpful comments on earlier drafts of this paper, and the anonymous referees for their careful reading and valuable suggestions, which have improved the overall quality and presentation of the paper.

This work was partially supported by the National Research Foundation of Korea (NRF) grant RS-2025-23323903.

\section{Background}\label{Pre}

In this section, we provide background knowledge on key topics mentioned in the previous section, such as the holonomy variety, anomalous subvarieties, and geometric isolation. Along the way, we also establish several key propositions that will be used later to prove the main results.

\subsection{Holonomy variety}\label{A-poly}
In this subsection, we introduce the holonomy variety of a cusped hyperbolic 3-manifold. We will not provide all the technical aspects, as they are not required for proving the main theorems. Instead, we suggest that readers refer to, for instance, \cite{Ahbijit} or \cite{CCGLS} for a more detailed description of the topic.

According to Thurston \cite{thu}, for a given $n$-cusped hyperbolic $3$-manifold $\mathcal{M}$, a geometric ideal triangulation $\mathcal{T}$ of it induces a so-called \textit{gluing variety} $G(\mathcal{T})$ of $\mathcal{M}$. The variety represents the necessary conditions for how the tetrahedra in $\mathcal{T}$ are glued together along their edges to yield a hyperbolic metric on $\mathcal{M}$. Roughly, $G(\mathcal{T})$ can also be seen as the set of all the possible hyperbolic structures on $\mathcal{M}$, or simply, the moduli space of $\mathcal{M}$. 

If $T_i$ is a torus cross-section of the $i^{\text{th}}$-cusp of $\mathcal{M}$ and $\mathfrak{m_i},\mathfrak{l_i}$ are the chosen meridian-longitude pair of $T_i$ $(1\leq i\leq n)$,  then each point of $G(\mathcal{T})$ gives rise to a (Euclidean) similarity structure on $T_i$, thus inducing the following \textit{holonomy map} 
\begin{equation*}
\pi_1(T_i)\longrightarrow \text{Aff}(\mathbb{C})=\{az+b\;:\;a\neq 0, b\in \mathbb{C}\}.
\end{equation*}
Consequently, the dilation components (i.e. derivatives) of the holonomies of $\mathfrak{m_i}$ and $\mathfrak{l_i}$ produce rational functions $M_i$ and $L_i$ respectively on $G(\mathcal{T})$. Now the \textit{holonomy variety} $\mathcal{X}$ of $\mathcal{M}$ is defined as the Zariski closure of the image under the following map:
\begin{equation*}
\xi\;:\;G(\mathcal{T})\longrightarrow (M_1, L_1, \dots, M_n, L_n).
\end{equation*}
The holonomy variety turns out to be independent of the triangulation $\mathcal{T}$, but depends only on the fundamental group of $\mathcal{M}$ as well as the chosen meridian-longitude pair of $T_i$. 

The holonomy variety, in general, may have several irreducible components; nevertheless, we are only interested in the so-called \emph{geometric component} of it. It is known that the geometric component of the holonomy variety of an $n$-cusped hyperbolic $3$-manifold $\mathcal{M}$ is an $n$-dimensional algebraic variety in $\mathbb{G}^{2n}\big(:=(M_1, L_1, \dots, M_n, L_n)\big)$ and contains $(1, \dots, 1)$ which gives rise to the complete hyperbolic metric structure of $\mathcal{M}$. We typically denote the component by $\mathcal{X}$ and, by abuse of notation, still term it as the holonomy variety of $\mathcal{M}$.  

The holonomy variety is known to possess many interesting symmetries. To describe them more effectively, it is convenient to work locally and analytically using logarithmic coordinates. More precisely, let
\begin{align}
u_i := \log M_i, \qquad v_i := \log L_i \qquad (1 \leq i \leq n). 
\end{align}
Then there exists a small neighborhood $\mathcal{N}_{\mathcal{X}}$ of a branch of $\mathcal{X}$ containing $(1,\dots,1)$ such that, in these logarithmic coordinates, the following statements hold on $\mathcal{N}_{\mathcal{X}}$ \cite{nz}:

\begin{theorem}[Neumann-Zagier] \label{potential}
(1)~$v_i=u_i\cdot\tau_i(u_1,\dots,u_n)$ where $\tau _i(u_1,\dots,u_n)$ is a holomorphic function with $\tau_i(0,\dots,0)=\tau_i\in \mathbb{C}\backslash\mathbb{R}$ ($1\leq i\leq n$).\\
(2)~There is a holomorphic function $\Phi(u_1,\dots,u_n)$ such that $v_i=\dfrac{1}{2}\dfrac{\partial \Phi}{\partial u_i}$ ($1\leq i\leq n$) and $\Phi(0,\dots,0)=0$.\\
(3)~$\Phi(u_1,\dots,u_n)$ is even in each argument and so its Taylor expansion is of the following form:
\begin{equation*}
\Phi(u_1,\dots,u_n)=(\tau_1u_1^2+\cdots+\tau_n u_n^2)+(c_{4,\dots,0}u_1^4+\cdots+c_{0,\dots,4}u_n^4)+\emph{(higher order)}.
\end{equation*}
\end{theorem}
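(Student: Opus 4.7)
The plan is to follow Neumann and Zagier's original approach via ideal triangulations. Fix an ideal triangulation $\mathcal{T}$ of $\mathcal{M}$ with $N$ tetrahedra carrying shape parameters $z_1,\dots,z_N$, and let $V(\mathcal{T})\subset(\mathbb{C}\setminus\{0,1\})^N$ be the variety cut out by the edge-consistency equations. By Thurston's deformation theory, $V(\mathcal{T})$ is smooth and $n$-dimensional near the complete point, and the meridian logarithms $u_1,\dots,u_n$ are independent local holomorphic parameters there (essentially by Mostow--Weil--Garland rigidity). Each longitude logarithm $v_i$ is therefore a holomorphic function of $(u_1,\dots,u_n)$ near the origin, and one checks from the definitions that $v_i(0,\dots,0)=0$.

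For assertion (1), I would argue that $\{u_i=0\}\subset\{v_i=0\}$ in a neighborhood of the origin: whenever the $i$th meridian eigenvalue is $1$, the $i$th meridian holonomy is either the identity or a non-trivial parabolic, and since $M_i$ and $L_i$ commute (they generate the peripheral $\mathbb{Z}^2$), in the second case $L_i$ must be parabolic with the same fixed point, forcing $L_i=1$ as well; the same conclusion propagates to all of $\{u_i=0\}$ near the origin by continuity. Hence $v_i/u_i$ extends to a holomorphic function $\tau_i(u_1,\dots,u_n)$, and $\tau_i(0,\dots,0)$ equals the complex modulus of the Euclidean cusp torus $T_i$, which is non-real because $T_i$ is a non-degenerate flat torus.

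For assertion (2), the central input is that the Jacobian matrix $A=(\partial v_i/\partial u_j)_{i,j}$ is symmetric. I would derive this from the symplectic structure on the space of gluing equations: Neumann--Zagier equip $\mathbb{C}^{2N}$ (the ambient space of pairs $(\log z_k,\log(1-z_k))$) with a natural symplectic form, show that the image of $V(\mathcal{T})$ is a Lagrangian subvariety, and then verify that the cusp coordinates $u_i,v_i$ inherit Darboux-like relations. Symmetry of $A$ is equivalent to closedness of the holomorphic 1-form $\omega=2\sum_i v_i\,du_i$, so on a simply connected neighborhood of the origin one integrates $\omega$ with the normalization $\Phi(0)=0$ to produce $\Phi$ with $v_i=\tfrac12\,\partial\Phi/\partial u_i$.

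For assertion (3), evenness of $\Phi$ in each argument is equivalent to each $v_j$ being odd in $u_j$ and even in every other $u_i$. Neumann--Zagier establish this by a direct asymptotic analysis of the solutions to the gluing equations: the substitution $u_i\mapsto -u_i$ lifts to an involution of the shape parameters which negates $v_i$ and fixes every other $v_j$, and evenness in the remaining arguments of each $\tau_j$ follows similarly. The main obstacle throughout is assertion (2): the Lagrangian/symplectic property of the gluing variety requires a careful combinatorial accounting of edge and cusp equations, and is the deepest part of the Neumann--Zagier framework; assertions (1) and (3), while not trivial, reduce to cleaner local/involutive considerations once (2) is in hand.
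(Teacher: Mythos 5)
The paper does not actually prove this statement: Theorem~\ref{potential} is quoted verbatim as a known result of Neumann--Zagier and attributed to the reference \cite{nz}, with no proof or sketch supplied. There is therefore no ``paper's own proof'' to compare against; the theorem functions as a black box throughout the rest of the article.

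That said, your sketch is a reasonable high-level account of the argument in \cite{nz}. A few remarks on accuracy and on what the outline leaves implicit. For assertion (1), your commutativity argument is correct: a nontrivial parabolic has a unique fixed point, any commuting isometry must preserve that fixed point as well as the horospherical foliation, and so must itself be parabolic or the identity; together with the fact that $\tau_i(0)$ is the modulus of the flat cusp torus, which is necessarily non-real, this is exactly what Neumann--Zagier use. For assertion (2), you correctly identify the symmetry of $(\partial v_i/\partial u_j)$ as the crux, but you should be aware that the full Lagrangian/symplectic formalism on $(\log z_k,\log(1-z_k))$-space, as you describe it, was articulated most explicitly in Neumann's later work on the Chern--Simons invariant; the 1985 paper proves the symmetry by a more hands-on combinatorial analysis of the gluing and cusp equations (their Lemma~2.2 and the surrounding discussion) that amounts to the same thing. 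Citing that combinatorial input as the ``deepest part'' is fair. For assertion (3), what is really doing the work is that the pair $(u_i,v_i)$ is only intrinsically defined up to a simultaneous sign change (since the holonomy eigenvalue pair $\{M_i,M_i^{-1}\}$ is what is canonical), so $\Phi$ must be invariant under each sign flip $u_i\mapsto -u_i$; your ``involution of shape parameters'' phrasing is a correct restatement of this. Finally, your appeal to smoothness of the gluing variety and independence of the $u_i$ near the complete structure is itself one of the nontrivial outputs of the Neumann--Zagier analysis (together with Mostow--Prasad rigidity), not merely a consequence of it, so a complete write-up would need to prove that step rather than quote it.

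Since the paper merely cites Theorem~\ref{potential}, the appropriate standard of comparison here is \cite{nz} itself, and against that standard your sketch captures the strategy correctly, with the caveats above.
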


We call $\tau_i$ the \emph{cusp shape} of $T_i$ with respect to $\mathfrak{m_i}, \mathfrak{l_i}$ and $\Phi(u_1,\dots,u_n)$ the \emph{Neumann-Zagier potential function} of $\mathcal{M}$ with respect to $\mathfrak{m_i}, \mathfrak{l_i}$ $(1\leq i\leq n)$. 
\begin{definition}
The complex manifold defined locally near 
$$(0,\dots,0)\in\mathbb{C}^{2n}(:=(u_1,v_1, \dots, u_n, v_n))$$ via the following holomorphic functions
\begin{equation*}
v_i=u_i\cdot\tau_i(u_1,\dots,u_n)\quad (1\leq i\leq n)
\end{equation*}
is called the analytic holonomy set of $\mathcal{M}$ and denoted by $\log\mathcal{X}$.  
\end{definition}

Clearly, by definition, $\log\mathcal{X}$ is locally biholomorphic to $\mathcal{N}_{\mathcal{X}}$. For $H$ given as an algebraic coset, $\mathcal{X}\cap H$ in general may have several irreducible components, but we focus solely on the component that touches $\mathcal{N}_{\mathcal{X}}$. Therefore, unless otherwise stated, when referring to $\mathcal{X}\cap H$, it always means that specific component. 

If an algebraic subgroup $H$ is explicitly given as
\begin{equation*}
\begin{gathered}
M_1^{a_{i1}}L_1^{b_{i1}} \cdots M_n^{a_{in}}L_n^{b_{in}} = 1\quad (1\leq i\leq m),
\end{gathered}
\end{equation*}
taking the logarithm of each coordinate, it is equivalent to
\begin{equation*}
\begin{gathered}
a_{i1}u_1 +b_{i1}v_1 +\cdots +a_{in}u_n +b_{in}v_n =0\quad (1\leq i\leq m).
\end{gathered}
\end{equation*}
Thus, locally near the identity, $\mathcal{X}\cap H$ is biholomorphic to the  manifold defined by
\begin{equation}\label{20042305}
\begin{gathered}
a_{i1}u_1 +b_{i1}u_1\tau_1(u_1,\dots,u_n)+\cdots+a_{in}u_n +b_{in}u_n\tau_n(u_1,\dots,u_n)=0\quad (1\leq i\leq m).
\end{gathered}
\end{equation}
Note that the dimension of $\mathcal{X}\cap H$ is attained by computing the rank of the Jacobian  
\begin{equation}\label{20040401}
\begin{gathered}
\left( \begin{array}{ccc}
a_{11}+b_{11}\tau_1 & \hdots & a_{1n}+b_{1n}\tau_n \\
       \vdots        & \ddots &        \vdots        \\
a_{m1}+b_{m1}\tau_1 & \hdots & a_{mn}+b_{mn}\tau_n 
\end{array} \right)
\end{gathered}
\end{equation}
of \eqref{20042305} at $(0, \dots , 0)$. 

As mentioned earlier, Theorem \ref{potential} will serve as a key ingredient in the proofs of the main theorems. Essentially, what we aim to do throughout the paper is to implement Theorem \ref{struc} in the context of holonomy varieties. To this end, we pass to the analytic setting outlined above and make use of Theorem \ref{potential}. This enables us to apply the results of Theorem \ref{struc}, which are somewhat theoretical, in a practical manner in the setting of holonomy varieties.\\

\noindent\textbf{Convention. }We represent the complex manifold in \eqref{20042305} by $\log (\mathcal{X}\cap H)$, call \eqref{20040401} the \textit{Jacobian} of $\log (\mathcal{X}\cap H)$ and often abbreviate it simply by 
$$\begin{array}{c}
( a_{ij}+b_{ij}\tau_j)_{1\leq i\leq m, \;1\leq j\leq n}.
\end{array}$$
Accordingly, the following matrix associated with $H$
$$
\left( \begin{array}{ccccc}
a_{11} & b_{11} & \hdots & a_{1n} &b_{1n} \\
       \vdots   & \vdots     & \ddots & \vdots  &     \vdots        \\
a_{m1} & b_{m1} & \hdots & a_{mn} & b_{mn} 
\end{array} \right)
$$
will be abbreviated as 
$$\begin{array}{c}
( a_{ij}\;\;b_{ij})_{1\leq i\leq m, \;1\leq j\leq n}.
\end{array}$$

The following proposition is elementary yet powerful, and it is frequently invoked throughout the paper. It also contains one of the core ideas from which this work originated. See also Proposition \ref{anomalous3}.

\begin{proposition}\label{anomalous2}
Let $\mathcal{X}$ be the holonomy variety of an $n$-cusped hyperbolic $3$-manifold and $H$ be an algebraic subgroup of codimension $m$ defined only in terms of the variables $M_i, L_i$ for $1\leq i\leq m$. That is, $H$ is defined by equation of the following form:
\begin{equation*}
M_1^{a_{i1}}L_1^{b_{i1}}\cdots M_m^{a_{im}}L_m^{b_{im}}=1, \quad (1\leq i\leq m). 
\end{equation*}
Then the rank of the Jacobian of $\log(\mathcal{X}\cap H)$ is $m$ if and only if
\begin{equation}\label{25070203}
\mathcal{X}\cap H=\mathcal{X}\cap (M_1=\cdots=M_m=1). 
\end{equation} 
\end{proposition}
\begin{proof}
Equivalently, we replace \eqref{25070203} with 
\begin{equation*}
\log (\mathcal{X}\cap H)=\log \mathcal{X}\cap (u_1=\cdots=u_m=0). 
\end{equation*} 
Since the dimension of $\log \mathcal{X}\cap (u_1=\cdots=u_m=0)$ is $n-m$, the ``if'' direction is clear. 

The ``only if'' direction follows from the obvious inclusion
\begin{equation*}
\log \mathcal{X}\cap (u_1=\cdots=u_m=0)\subset \log (\mathcal{X}\cap H),  
\end{equation*} 
combined with the implicit function theorem. 
\end{proof}
\subsection{Anomalous subvarieties on holonomy varieties}\label{23080305}
For a given cusped hyperbolic $3$-manifold $\mathcal{M}$, if its holonomy variety $\mathcal{X}$ is defined as above, anomalous subvarieties are readily detected from topological perspectives. For instance, by Theorem \ref{potential} (1), 
\begin{equation*}
u_i=0\Longleftrightarrow v_i=0
\end{equation*}
for each $i$ ($1\leq i\leq n$), and this implies   
\begin{equation}\label{20042501}
\big(\mathcal{X}\cap (M_i=1)\big)=\big(\mathcal{X}\cap (L_i=1)\big)=\big(\mathcal{X}\cap (M_i=L_i=1)\big).
\end{equation} 
Note that the variety in \eqref{20042501} is an $(n-1)$-dimensional subvariety of $\mathcal{X}$. Hence, if we let $H$ denote the algebraic subgroup defined by $M_i=L_i=1$ and $\mathcal{Y}$ denote the variety given in \eqref{20042501}, then $H, \mathcal{Y}$ and $\mathcal{X}$ satisfy the dimension condition in \eqref{20031301}; that is, $\mathcal{Y}$ is an anomalous subvariety of $\mathcal{X}$. 

Recall that $\mathcal{X}$ parametrizes deformations of hyperbolic structures (complete or incomplete) on $\mathcal{M}$ and the point $(M_1,\dots, L_n)=(1, \dots, 1)$ corresponds to the unique complete hyperbolic metric on $\mathcal{M}$. Therefore, in a geometric sense, one may view $\mathcal{Y}$ as the anomalous subvariety of $\mathcal{X}$ obtained by \textit{keeping the} $i^{\text{th}}$ \textit{cusp of} $\mathcal{M}$ \textit{complete}. 


Another source of anomalous subvarieties arises from a symmetry among the cusps of a manifold. For instance, let $\mathcal{M}$ be a $2$-cusped hyperbolic $3$-manifold having a self-isometry sending one cusp to the other. Moreover, assuming that the meridian $\mathfrak{m_1}$ (resp. longitude $\mathfrak{l_1}$) of the first cusp maps to the meridian $\mathfrak{m_2}$ (resp. longitude $\mathfrak{l_2}$) of the second cusp under the isometry, it is straightforward to verify that $u_1$ and $u_2$ are swapped when $v_1$ and $v_2$ are swapped. That is, 
$$v_1(u_2, u_1)=v_2(u_1, u_2)\quad  \text{and}\quad  v_2(u_2, u_1)=v_1(u_1, u_2).$$ Consequently, using the properties of $v_1$ and $v_2$ described in Theorem \ref{potential}, one has 
\begin{equation*}
u_1=u_2 \Longleftrightarrow v_1=v_2,
\end{equation*}
which further implies 
\begin{equation}\label{23081501}
\big(\mathcal{X}\cap (M_1=M_2)\big)=\big(\mathcal{X}\cap (L_1=L_2)\big)=\big(\mathcal{X}\cap (M_1=M_2, L_1=L_2)\big)
\end{equation}
is a $1$-dimensional anomalous subvariety of $\mathcal{X}$. In Section \ref{exam}, we provide a concrete example fitting into this scenario. 

Finally, when a cusped hyperbolic $3$-manifold $\mathcal{M}$ is realized as a covering space of another hyperbolic $3$-manifold $\mathcal{N}$, the holonomy variety of $\mathcal{M}$ contains an anomalous subvariety. Specifically, if $\mathcal{M}$ is a $2$-cusped hyperbolic $3$-manifold covering a $1$-cusped hyperbolic $3$-manifold $\mathcal{N}$ such that the meridian-longitude pair of each cusp of $\mathcal{M}$ maps to that of the cusp of $\mathcal{N}$, then identifying $\mathfrak{m_1}$ (resp. $\mathfrak{l_1}$) with $\mathfrak{m_2}$ (resp. $\mathfrak{l_2}$) yields a presentation of the fundamental group of $\mathcal{N}$. Accordingly,
\begin{equation}\label{25062501}
\mathcal{X}\cap (M_1=M_2, L_1=L_2)
\end{equation}
is a subvariety of $\mathcal{X}$ isomorphic to the holonomy variety of $\mathcal{N}$, which is of dimension $1$, and hence \eqref{25062501} is an anomalous subvariety of $\mathcal{X}$. Likewise, the holonomy varieties of manifolds in the commensurability class of $\mathcal{M}$ are interconnected within the framework of anomalous subvarieties.

The notion of anomalous subvariety plays a crucial role, particularly in the classification of Dehn fillings of a cusped hyperbolic $3$-manifold. For instance, the author, jointly with S. Oh, classifies Dehn fillings of a $1$-cusped hyperbolic $3$-manifold $\mathcal{M}$ via complex volume in \cite{JO}. It is further shown that a sequence of pairs of Dehn fillings of $\mathcal{M}$ with the same complex volume corresponds to an anomalous subvariety of $\mathcal{X} \times \mathcal{X}$. In other words, the points corresponding to these pairs of Dehn fillings always lie on this anomalous subvariety.

An analogue of the aforementioned result also holds for $2$-cusped manifolds \cite{jeon5, jeon6, JO1}, and it appears that it can be extended to any $n$-cusped hyperbolic $3$-manifold with $n \geq 3$. Consequently, studying and classifying anomalous subvarieties of $\mathcal{X}$ (and of $\mathcal{X} \times \mathcal{X}$) is of great importance in addressing concrete problems in hyperbolic geometry, beyond purely theoretical or aesthetic considerations.

\subsection{Rationally independent cusp shapes}\label{23080201}
As discussed above, let $\mathcal{M}$ be a $2$-cusped hyperbolic $3$-manifold allowing a self-isometry. In this case, if it is further assumed that  the isometry induces the following map
\begin{equation*}
\mathfrak{m_1}\rightarrow \mathfrak{m_2}^a\mathfrak{l_2}^b \quad \text{and}\quad \mathfrak{l_1}\rightarrow \mathfrak{m_2}^c\mathfrak{l_2}^d, 
\end{equation*}
then the two cusp shapes $\tau_1$ and $\tau_2$ of $\mathcal{M}$ are related by  
\begin{equation}\label{23080202}
\frac{1}{\tau_1}=\frac{a+b\tau_2}{c+d\tau_2}.
\end{equation}
On the other hand, for a given $2$-cusped hyperbolic $3$-manifold $\mathcal{M}$, if there are no $a,b,c,d\in \mathbb{Z}$ satisfying \eqref{23080202} for the two cusps $\tau_1, \tau_2$ of $\mathcal{M}$, that is an obstruction for the existence of a self-isometry sending one cusp to the other. Motivated by this, we present the following definition:
\begin{definition}
Let $\mathcal{M}$ be a $2$-cusped hyperbolic $3$-manifold with two cusp shapes $\tau_1, \tau_2$. We say $\mathcal{M}$ has rationally independent cusp shapes if $1,\tau_1, \tau_2$ and $\tau_1\tau_2$ are linearly independent over $\mathbb{Q}$.\footnote{Note that the definition is independent of the choice of $m_i, l_i$ ($1\leq i\leq 2$).}   
\end{definition}
The above definition applies quite generally. For instance, if $\tau_2\notin\mathbb{Q}(\tau_1)$, then $\tau_1$ and $\tau_2$ are rationally independent. 

Now we further extend the definition to any $n$-cusped ($n\geq 3$) hyperbolic $3$-manifold as follows: 
\begin{definition}\label{20033001}
Let $\mathcal{M}$ be an $n$-cusped manifold and $\tau_1, \dots, \tau_n$ be its cusp shapes. We say $\mathcal{M}$ has rationally independent cusp shapes if the elements in
\begin{equation*}
\{1\}\cup \{\tau_{i_1}\cdots\tau_{i_l}\text{ }|\text{ }1\leq i_1<\cdots<i_l\leq n\}\end{equation*}
are linearly independent over $\mathbb{Q}$.\footnote{Similar to the $2$-cusped case, the definition in this case is also independent of the choice of $m_i, l_i$ ($1\leq i\leq n$).} 
\end{definition}
For example, if $\mathcal{M}$ is a $3$-cusped hyperbolic $3$-manifold with cusp shapes $\tau_i$ ($1\leq i\leq 3$), then $\mathcal{M}$ has rationally independent cusp shapes when
\begin{equation*}
1, \quad \tau_1,\quad \tau_2, \quad \tau_3,\quad  \tau_1\tau_2,\quad  \tau_1\tau_3, \quad \tau_2\tau_3, \quad \tau_1\tau_2\tau_3
\end{equation*}
are linearly independent over $\mathbb{Q}$. Similar to the $2$-cusped case, if $\tau_{i+1}\notin\mathbb{Q}(\tau_1, \dots, \tau_i)$ for each $i$ ($1\leq i\leq n-1$), then $\tau_1, \dots \tau_n$ are rationally independent. 

According to W. Neumann's realization conjecture, every non-real number field can be realized as the trace field of some hyperbolic $3$-manifold. Given the fact that the cusp shapes are the elements of the trace field and, moreover, they typically generate the trace field \cite{NR2}, it appears that a generic cusped hyperbolic $3$-manifold has rationally independent cusp shapes. 

\subsection{Geometric isolation}\label{SGI}

The following is one of the equivalent definitions of SGI given in \cite{rigidity}:
\begin{definition}\label{isolation}
Let $\mathcal{M}$ be an $n$-cusped hyperbolic $3$-manifold. We say cusps $1,\dots,k$ are strongly geometrically isolated (SGI) from cusps $k+1,\dots,n$ 
if $v_1,\dots,v_k$ only depend on $u_1,\dots, u_k$ and not on $u_{k+1},\dots,u_n$.   
\end{definition}  
For the simplest case, if the analytic holonomy set $\mathcal{X}$ of a $2$-cusped hyperbolic $3$-manifold is defined by
\begin{equation*}
v_1=u_1\cdot\tau_1(u_1), \quad v_2=u_2\cdot\tau_2(u_2),
\end{equation*} 
that is, each $v_i$ ($i=1,2$) depends only on $u_i$, then two cusps of $\mathcal{M}$ are SGI from each other. 

The following proposition is proved easily. 
\begin{proposition}\label{20041602}
Let $\mathcal{M}$ be an $n$-cusped hyperbolic $3$-manifold and $\mathcal{X}$ be its holonomy variety. If $\mathcal{M}$ has cusps which are SGI from the rest, then $\mathcal{X}^{oa}=\emptyset$. 
\end{proposition}
\begin{proof}
Without loss of generality, suppose cusps $1, \dots, k$ ($k<n$) are SGI from the rest. 

First we claim that the holonomy variety $\mathcal{X}$ is of the form $\mathcal{X}_1\times \mathcal{X}_2$ in $\mathbb{G}^{2k}\times \mathbb{G}^{2n-2k}$. Consider the projection
\begin{equation*}
\begin{gathered}
\xi_1: \mathcal{X}\longrightarrow \mathbb{G}^{2k}(=(M_1,L_1, \dots, M_k, L_k)).
\end{gathered}
\end{equation*}
Then $\mathcal{X}_1:=\overline{\xi_1(\mathcal{X})}$ is an algebraic variety, which contains a local branch biholomorphic to the complex manifold defined by $v_i$ ($1\leq i\leq k$). As each $v_i$ ($1\leq i\leq k$) depends only on $u_1, \dots, u_k$ by the assumption, the dimension of the complex manifold\footnote{That is, the one defined by $v_i$ ($1\leq i\leq k$).} is $k$, which subsequently leads to $\dim \mathcal{X}_1= k$.\footnote{In fact, one can check $\mathcal{X}_1$ is equal to the projection of $\mathcal{X}\cap (M_1=\cdots=M_k=1)$ under $\xi_1$, which is a $k$-dimensional algebraic variety in $\mathbb{G}^{2k}$.} Similarly, by considering
\begin{equation*}
\begin{gathered}
\xi_2: \mathcal{X}\longrightarrow \mathbb{G}^{2n-2k}(=(M_{k+1},L_{k+1}, \dots, M_n, L_n)), 
\end{gathered}
\end{equation*}
we attain a variety $\mathcal{X}_2:= \overline{\xi_2(\mathcal{X})}$ of dimension $n-k$. As $\mathcal{X}\subset \mathcal{X}_1\times \mathcal{X}_2$ and $\dim \mathcal{X}=\dim (\mathcal{X}_1\times \mathcal{X}_2)=n$, it is  concluded that $\mathcal{X}=\mathcal{X}_1\times \mathcal{X}_2$. 
 
Now we claim $\mathcal{X}^{oa}=\emptyset$. Let $H_{(\xi_1^m, \dots, \xi_k^l)}$ be an algebraic coset defined by 
\begin{equation*}
M_1=\xi_1^m,\quad  L_1=\xi_1^l,\quad  \dots, \quad  M_k=\xi_k^m, \quad L_k=\xi_k^l
\end{equation*}
where $(\xi_1^m,\dots, \xi_k^l)\in\mathcal{X}_1$. Then clearly $H_{(\xi_1^m, \dots, \xi_k^l)}\cap \mathcal{X}$ is isomorphic to $\mathcal{X}_2$ for each $(\xi_1^m,\dots, \xi_k^l)\in\mathcal{X}_1$ and, as 
\begin{equation*}
\dim \mathcal{X}_2=n-k>\dim H_{(\xi_1^m, \dots, \xi_k^l)}+\dim \mathcal{X}-2n=(2n-2k)+n-2n=n-2k,
\end{equation*}
$H_{(\xi_1^m, \dots, \xi_k^l)}\cap \mathcal{X}$ is an anomalous subvariety of $\mathcal{X}$. Moreover, we have
\begin{equation*}
\bigcup_{(\xi_1^m,\dots, \xi_k^l)\in\mathcal{X}_1}(H_{(\xi_1^m, \dots, \xi_k^l)}\cap \mathcal{X})=\mathcal{X}, 
\end{equation*}
implying $\mathcal{X}^{oa}=\emptyset$. This completes the proof of the proposition. 
\end{proof}

However, the opposite direction of the above proposition is not true in general. To illustrate this, suppose that there exists a $3$-cusped hyperbolic $3$-manifold $\mathcal{M}$ whose Neumann-Zagier potential function $\Phi(u_1, u_2, u_3)$ is formally given as 
\begin{equation*}
\begin{gathered}
\Phi(u_1, u_2, u_3)=\sum^{\infty}_{i:even}a_{1i}u_1^{i}+\sum^{\infty}_{i:even}a_{2i}u_2^i+\sum^{\infty}_{i:even}a_{3i}u_3^{i}+\sum^{\infty}_{i, j:even}b_{i,j}u_1^{i}u_2^j+\sum^{\infty}_{i,j:even}c_{i,j}u_1^{i}u_3^j,
\end{gathered}
\end{equation*}
and thus 
\begin{equation}\label{19080601}
\begin{gathered}
v_1=\frac{1}{2}\Big(\sum^{\infty}_{i:even}ia_{1i}u_1^{i-1}+\sum^{\infty}_{i, j:even}ib_{i,j}u_1^{i-1}u_2^j+\sum^{\infty}_{i,j:even}ic_{i,j}u_1^{i-1}u_3^j\Big),\\
v_2=\frac{1}{2}\Big(\sum^{\infty}_{i:even}ia_{2i}u_2^{i-1}+\sum^{\infty}_{i, j:even}jb_{i,j}u_2^{j-1} u_1^i\Big),\;
v_3=\frac{1}{2}\Big(\sum^{\infty}_{i:even}ia_{3i}u_3^{i-1}+\sum^{\infty}_{i, j:even}jc_{i,j}u_3^{j-1}u_1^{i}\Big).
\end{gathered}
\end{equation}
For $\xi_1, \xi_2 \in \mathbb{C}$ sufficiently close to $0$, 
\begin{equation*}
\big(u_1=\xi_1,u_2=\xi_2, v_2=v_2(\xi_1, \xi_2)\big)\cap \log\mathcal{X}
\end{equation*}
is a $1$-dimensional analytic subset of $\log\mathcal{X}$, parametrized by $u_3$. Equivalently, if $H_{(\xi_1, \xi_2)}$ is an algebraic coset defined by 
\begin{equation*}
M_1=e^{\xi_1},\quad M_2=e^{\xi_2},\quad L_2=e^{v_2(\xi_1, \xi_2)},
\end{equation*}
then $\mathcal{X}\cap H_{(\xi_1, \xi_2)}$ is a $1$-dimensional anomalous subvariety of $\mathcal{X}$. Clearly,  
\begin{equation*}
\bigcup_{(\xi_1, \xi_2)\in \mathbb{C}^2}\Big(\big(u_1=\xi_1,u_2=\xi_2, v_2=v_2(\xi_1, \xi_2)\big)\cap \log\mathcal{X}\Big)=\log\mathcal{X}
\end{equation*}
and so $\mathcal{X}^{oa}=\emptyset$. On the other hand, none of the cusps of M is strongly geometrically isolated from the others. Indeed, from \eqref{19080601}, each $v_i$ depends on variables associated with other cusps (for example, $v_2$ depends on $u_1$, and $v_3$ depends on $u_1$), and hence no cusp is SGI.

Inspired by this, we further refine and generalize \eqref{isolation} as below. 
\begin{definition}\label{19080602}
Let $\mathcal{M}$ be an $n$-cusped hyperbolic $3$-manifold ($n\geq 3$). Suppose $k, l$ are integers such that $0<k<l\leq n$. We say that cusps $1, \dots, k$ are weakly geometrically isolated (WGI) from cusps $k+1,\dots, l$ if each 
\begin{equation*}
v_i(u_1, \dots, u_l, 0, \dots, 0)\quad (1\leq i\leq k)
\end{equation*}
depends only on $u_1, \dots, u_k$, not on $u_{k+1},\dots, u_{l}$. In other words, keeping cusps $l+1,\dots, n$ complete, if cusps $1, \dots, k$ are SGI from cusps $k+1, \dots, l$, then we say cusps $1, \dots, k$ are WGI from cusps $k+1, \dots, l$.
\end{definition}
For instance, if $u_1=0$ in \eqref{19080601}, it is reduced to 
\begin{equation*}
v_2=\frac{1}{2}\sum^{\infty}_{i:even}ia_{2i}u_2^{i-1},\quad v_3=\frac{1}{2}\sum^{\infty}_{i:even}ia_{3i}u_3^{i-1},
\end{equation*}
and therefore the second cusp is WGI from the third cusp in the example. 

Using Definition \ref{19080602}, Theorem \ref{19082701} is simply restated as follows:
\begin{theorem}
Let $\mathcal{M}$ be an $n$-cusped ($n\geq 2$) hyperbolic $3$-manifold having rationally independent cusp shapes and $\mathcal{X}$ be its holonomy variety. If 
\begin{equation*}
\mathcal{X}^{oa}=\emptyset,
\end{equation*} 
then there exist cusps of $\mathcal{M}$ which are WGI from other cusps of $\mathcal{M}$. 
\end{theorem}

Experimental evidence from SnapPy proposes that, if $\mathcal{M}$ is a $2$-cusped hyperbolic $3$-manifold whose cusps are SGI, then some covering spaces of $\mathcal{M}$ may possess this property. We will not pursue this further in the paper, but research along this direction would be interesting.

\subsection{Structure theorem}

The following theorem, due to Bombieri, Masser and Zannier, elucidates the general structure of anomalous subvarieties of any given algebraic variety. As noted earlier in the introduction, this theorem will serve as a cornerstone in establishing our main results in the next two sections.

\begin{theorem}\cite{za}\label{struc}
Let $\mathcal{X}$ be an irreducible variety in $\mathbb{G}^n$ of positive dimension defined over $\mathbb{\overline Q}$. \\
(a) For any irreducible algebraic subgroup $H$ with 
\begin{equation} \label{dim1}
1\leq n- \dim H \leq \dim \mathcal{X},
\end{equation}
the union $\mathcal{Z}_H$ of all subvarieties $\mathcal{Y}$ of $\mathcal{X}$ contained in any algebraic coset of $H$ with
\begin{equation} \label{dim2}
\dim H= n-(1+\dim\mathcal{X})+\dim \mathcal{Y} 
\end{equation}
is a closed subset of $\mathcal{X}$.\\
(b) There is a finite collection $\Psi=\Psi_{\mathcal{X}}$ of such algebraic subgroups $H$ so that every maximal anomalous subvariety $\mathcal{Y}$ of $\mathcal{X}$ is a component of $\mathcal{X} \cap gH$
for some $H$ in $\Psi$ satisfying \eqref{dim1} and \eqref{dim2} and some $g$ in $\mathcal{Z}_H$. Moreover $\mathcal{X}^{oa}$ is obtained from $\mathcal{X}$ by removing the $\mathcal{Z}_H$ of all $H$ in $\Psi$, 
and thus it is Zariski open in $\mathcal{X}$.
\end{theorem}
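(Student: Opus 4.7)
For part (a), I would interpret $Z_H$ as the locus of exceptionally large fibers of a natural projection. Let $\pi_H : \mathbb{G}^n \to \mathbb{G}^n / H$ denote the quotient morphism and restrict to $\pi_H |_{\mathcal{X}}$. A subvariety $\mathcal{Y}$ of $\mathcal{X}$ lies in a coset $gH$ precisely when $\pi_H(\mathcal{Y})$ is a single point, so $\mathcal{Y}$ must lie in a single fiber of $\pi_H |_{\mathcal{X}}$. Rewriting \eqref{dim2} as $\dim \mathcal{Y} = \bigl(\dim \mathcal{X} - (n - \dim H)\bigr) + 1$ shows that $Z_H$ is exactly the set of $x \in \mathcal{X}$ for which the irreducible component of $\pi_H|_{\mathcal{X}}^{-1}(\pi_H(x))$ through $x$ has dimension at least the generic fiber dimension plus one. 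Chevalley's upper semicontinuity of fiber dimensions applied to $\pi_H|_{\mathcal{X}}$ then shows $Z_H$ is closed in $\mathcal{X}$.

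For part (b), the plan is to attach to each maximal anomalous $\mathcal{Y} \subset \mathcal{X}$ a canonical torus. Consider the smallest algebraic coset $K_\mathcal{Y}$ containing $\mathcal{Y}$, with underlying torus $H_\mathcal{Y}$; the standard fact that the intersection of all cosets containing a given subvariety is again a coset makes $K_\mathcal{Y}$ and $H_\mathcal{Y}$ well defined, and by maximality of $\mathcal{Y}$ one can arrange that $H_\mathcal{Y}$ satisfies \eqref{dim1} and \eqref{dim2}. Setting $\Psi := \{H_\mathcal{Y}\}$ and granting finiteness, the final sentences of (b) are immediate: any anomalous subvariety lies in a maximal one, hence in some $Z_H$ with $H \in \Psi$, so $\mathcal{X}^{oa} = \mathcal{X} \setminus \bigcup_{H \in \Psi} Z_H$, which is Zariski open in $\mathcal{X}$ by (a) together with the finiteness.

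The finiteness of $\Psi$ is the hard part, and I would attack it by Noetherian induction on $\dim \mathcal{X}$. For each candidate $H$, the set $Z_H$ is a proper closed subset of $\mathcal{X}$ (since a maximal anomalous subvariety has codimension at least one in $\mathcal{X}$), so each irreducible component of $Z_H$ has strictly smaller dimension than $\mathcal{X}$. I would show that a maximal anomalous $\mathcal{Y} \subset \mathcal{X}$ with coset stabilizer $H_\mathcal{Y}$ is in fact a maximal anomalous subvariety of some irreducible component of $Z_{H_\mathcal{Y}}$ with respect to an appropriately modified torus; the inductive hypothesis applied to each such component then yields a finite family of tori that, combined with $H_\mathcal{Y}$ itself, gives the required finite $\Psi$. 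The main technical obstacles I expect are (i) translating the anomalous dimension condition on $\mathcal{Y} \subset \mathcal{X}$ into an anomalous condition on $\mathcal{Y}$ as a subvariety of an irreducible component of $Z_H$, with all the codimension bookkeeping kept consistent across the induction, and (ii) ensuring that distinct maximal anomalous subvarieties cannot produce infinitely many distinct coset stabilizers — this latter point should follow because the coset stabilizer is intrinsically determined by $\mathcal{Y}$ (via the group generated by $\mathcal{Y} \cdot \mathcal{Y}^{-1}$), and the Noetherian descent on $\mathcal{X}$ permits only finitely many rounds of lifting before the process terminates.
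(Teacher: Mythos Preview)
The paper does not give its own proof of this theorem; it merely quotes it as Theorem~1.4 of Bombieri--Masser--Zannier \cite{za} and uses it as a black box. So there is no ``paper's proof'' to compare against. I will therefore comment on your proposal on its own merits and against the original BMZ argument.

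Your treatment of part~(a) is essentially correct and is indeed the standard argument: $Z_H$ is the locus where the fiber of $\pi_H|_{\mathcal{X}}$ has dimension strictly greater than the generic value, and Chevalley's semicontinuity gives closedness.

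Part~(b), however, has a genuine gap in the finiteness step. Your Noetherian induction on $\dim\mathcal{X}$ does not close: you begin with a single maximal anomalous $\mathcal{Y}$, produce its torus $H_{\mathcal{Y}}$, and propose to apply the inductive hypothesis to the components of $Z_{H_{\mathcal{Y}}}$. But there may be infinitely many maximal anomalous $\mathcal{Y}$ a priori, each yielding a possibly different $H_{\mathcal{Y}}$ and a possibly different closed set $Z_{H_{\mathcal{Y}}}$. The inductive hypothesis would then be invoked on infinitely many lower-dimensional varieties, and nothing you have said forces the resulting union of finite families to be finite. Your point~(ii) acknowledges this but the justification offered --- that $H_{\mathcal{Y}}$ is intrinsically determined by $\mathcal{Y}$ and that ``Noetherian descent permits only finitely many rounds'' --- does not address the problem: intrinsic determination says nothing when the $\mathcal{Y}$'s themselves vary over an infinite set, and Noetherianity bounds the length of each descending chain, not the number of chains.

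In the original BMZ proof, finiteness of $\Psi$ is obtained by a quite different mechanism: one bounds the degree (or, in later versions, uses Ax's theorem on the exponential function) to show that the linear subspaces $\operatorname{Lie}(H)$ that can occur are constrained to lie in finitely many Schubert-type cells determined by the Gauss map of $\mathcal{X}$, and then one extracts a finite set of representatives. A purely Noetherian/dimension-theoretic induction of the kind you sketch is not known to suffice; some uniform control on the tori --- via degrees, tangent spaces, or a transcendence statement --- is needed, and that is the missing idea in your proposal.
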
 
Examples explaining the above theorem were already provided in the previous subsection. For $\mathcal{X}=\mathcal{X}_1\times \mathcal{X}_2(\subset\mathbb{G}^{2k}\times \mathbb{G}^{2n-2k})$ given in the proof of Proposition \ref{20041602}, as each $L_i$ ($1\leq i\leq k$) depends only on $M_1, \dots, M_k$ over $\mathcal{X}_1$, if $H$ is an algebraic subgroup defined by 
\begin{equation*}
M_1=\cdots=M_k=L_i=1\quad (1\leq i\leq k), 
\end{equation*}
then one can check $\mathcal{Z}_H=\mathcal{X}$. 
 
If $\mathcal{Z}_H=\mathcal{X}$ (and so $\mathcal{X}^{oa}=\emptyset$), we say $\mathcal{X}$ \emph{is foliated or degenerated by anomalous subvarieties contained in} 
\begin{equation*}
\bigcup_{g\in \mathcal{Z}_H}\mathcal{X}\cap gH
\end{equation*} 
\emph{or algebraic cosets of $H$}. 
\\

\noindent\textbf{Remark.} Given the structure theorem above, one might wonder whether $\mathcal{X}^{oa}\neq \emptyset$ is a common characteristic or not for a given algebraic variety $\mathcal{X}$ in general. Confirming this by any objective or reasonable means, however, appears to be a hard task. Nonetheless, once we narrow our attention down to the class of holonomy varieties of hyperbolic $3$-manifolds, one of our main results, Theorem \ref{19082701}, says that $\mathcal{X}^{oa}\neq \emptyset$ would be the generic behavior, as explained in Section \ref{23081503} (see the discussion after Theorem \ref{20033003}).

\section{Main Result I}
The goal of this section is to prove our first main result, Theorem \ref{19072301}. To this end, we first establish a couple of lemmas in Subsection \ref{260422011} that will be used repeatedly in the proof. Then, as a warm-up, we treat the simplest cases of Theorems \ref{19072301} and \ref{19082701} in Subsection \ref{26042202}. Finally, in the last subsection, we prove Theorem \ref{19072301} in full generality.

\subsection{Preliminary lemma}\label{260422011}
The following lemma is central to the proof of Theorem \ref{19072301}. However, for motivational reasons, we recommend that the reader skim only the statement of the lemma and skip its proof on a first reading. It may be more effective to return to the proof after seeing the importance and applicability of the lemma in the argument.

\begin{lemma}\label{040201} 
Let 
\begin{equation}\label{040205}
\{\bold{v_1},\bold{w_1},\dots, \bold{v_n},\bold{w_n}\}
\end{equation}
be a set of vectors in $\mathbb{Q}^n$ where $n\geq 2$. Suppose, for any subset 
\begin{equation}\label{040206}
\{\bold{u_1},\dots, \bold{u_n}\}
\end{equation}
of \eqref{040205} where $\bold{u_i}=\bold{v_i}$ or $\bold{w_i}$ ($1 \leq i\leq n$), the vectors in \eqref{040206} are linearly dependent. Then there exists $\{i_1,\dots,i_m\}\subsetneq\{1,\dots,n\}$ ($m\geq 1$) such that the dimension of the vector space spanned by 
\begin{equation}\label{040204}
\{\bold{v_{i_1}},\bold{w_{i_1}},\dots,\bold{v_{i_m}},\bold{w_{i_m}}\}
\end{equation}
is at most $m$. 
\end{lemma}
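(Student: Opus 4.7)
The plan is to deduce the lemma from Rado's theorem on independent transversals in matroids, applied to the linear matroid on $\{\bold{v_i},\bold{w_i}\}_{i=1}^{n}$ in $\mathbb{Q}^n$. Setting $A_i:=\{\bold{v_i},\bold{w_i}\}$, the hypothesis reads: no transversal (a choice of one element per $A_i$) is linearly independent. Rado's criterion then yields some nonempty $J\subseteq\{1,\dots,n\}$ with $\dim\,\text{span}\bigl(\bigcup_{j\in J}A_j\bigr)<|J|$. If $|J|<n$, simply set $\{i_1,\dots,i_m\}:=J$. If $|J|=n$, then $\dim\,\text{span}\bigl(\bigcup_{j=1}^{n}A_j\bigr)\le n-1$, and dropping any one index from $J$ yields a proper subset $I$ of size $n-1$ satisfying $\dim\,\text{span}\bigl(\bigcup_{i\in I}A_i\bigr)\le n-1=|I|$.

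Given the elementary linear-algebra style of this section and the deliberate prior development of Lemma \ref{040306} for basis-exchange bookkeeping, I expect the authors will avoid invoking Rado directly and instead supply a hands-on inductive proof. A natural setup is to pick a transversal $T^*=\{\bold{u^*_1},\dots,\bold{u^*_n}\}$ of maximum rank $r<n$, reorder so that $\{\bold{u^*_1},\dots,\bold{u^*_r}\}$ is a basis of $U:=\text{span}(T^*)$, and observe via a one-element swap that both vectors of every tail pair $A_i$ $(i>r)$ must already lie in $U$; otherwise the swapped transversal would attain rank $r+1$, contradicting maximality. When $r\le n-r$, taking $I:=\{r+1,\dots,n\}$ immediately gives $\dim\le r\le n-r=|I|$. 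Otherwise, one analyzes the ``bad'' head indices $i\le r$ for which the other element $\bold{u^{**}_i}\notin U$; combined swaps then force every tail pair into the hyperplane $H_i$ of $U$ spanned by $\{\bold{u^*_k}:k\le r,\,k\ne i\}$, and iterating or recursing on the lemma for the smaller induced configuration in the intersection $\bigcap_i H_i$ produces the desired $I$. Lemma \ref{040306} enters at this recursion, allowing basis vectors to be swapped cleanly through the resulting chain of subspaces without destroying independence.

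The main obstacle is the regime $r>n/2$, where the naive tail index set $\{r+1,\dots,n\}$ is too small to absorb the dimension $r$ of $U$. Resolving it requires carefully choosing which bad head indices to discard and which to retain using the hyperplane intersections dictated by the swap analysis, and verifying that the output $I$ remains strictly smaller than $\{1,\dots,n\}$ in every branch of the case analysis. Managing dimensions, proper-subset constraints, and basis exchanges simultaneously across the iteration is the principal technical hurdle.
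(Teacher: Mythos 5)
Your first-paragraph reduction to Rado's theorem is correct and considerably more concise than the paper's argument. Working in the linear matroid on the $2n$ labeled vectors with blocks $A_i=\{\bold{v_i},\bold{w_i}\}$, the hypothesis is precisely that no transversal is independent, so Rado's criterion furnishes a nonempty $J$ with $\dim\text{span}\bigl(\bigcup_{j\in J}A_j\bigr)<|J|$; your two cases (either $J$ is already a proper subset, or $J=\{1,\dots,n\}$ and one discards a single index, using monotonicity of the span under removal) correctly produce a proper index set $I$ with $\dim\le|I|$. This is a genuinely different route from the paper's: the paper never invokes Rado, and instead fixes a maximum-rank partial transversal $U$ (size $h$) together with a maximal independent extension $U'$ by counter-vectors (size $k\le h$), shows every tail pair lies in $\widetilde{\mathcal{V}_{H\backslash K}}$, and then iteratively peels off layers $V_1,V_2,\dots\subseteq V_{H\backslash K}$ of head vectors that are interchangeable with outside vectors or counter-vectors, using Lemma~\ref{040306} for the basis-exchange bookkeeping, until some $V_j=\emptyset$ and the accumulated layers $V_1\cup\cdots\cup V_{j-1}$ together with their counter-vectors form the desired rank-deficient proper subset. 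Your second-paragraph reconstruction captures the right flavor (maximality plus swap analysis over $\widetilde{\mathcal{V}_{H\backslash K}}$) but differs in detail: the actual iteration is over layers of head indices driven by interchangeability with tail and counter-vectors, not a recursion on hyperplane intersections. The trade-off is the usual one: the Rado route is short but imports a nontrivial theorem, while the paper's argument is longer but entirely self-contained and elementary.
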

Note that $\bold{v_i}$ or $\bold{w_i}$ could be the zero vector. For instance, if 
\begin{equation*}
\bold{v_i}=\bold{w_i}=\bold{0}, 
\end{equation*}
for some $i$, we get the desired result simply by letting \eqref{040204} be $\{\bold{v_i}, \bold{w_i}\}$. 

The proof of Lemma \ref{040201} relies heavily on the following theorem of R. Rado \cite{rado}.\footnote{In an earlier version of the paper, we claimed the lemma without appealing to Rado’s theorem. However, it turns out that what we actually proved was a special case of Rado’s theorem; thus, by appealing to it, the proof can be significantly streamlined.}

\begin{theorem}[R. Rado]
Let $A_1,\dots,A_n$ be finite subsets of a vector space over ${\mathbb{Q}}$.
Then there exist $\bold{u_i}\in A_i$ $(1\le i\le n)$ such that $\bold{u_1},\dots,\bold{u_n}$ are linearly independent if and only if
\[
\dim_{\mathbb{Q}} \mathrm{span}\bigl(\bigcup_{i\in I} A_i\bigr)\ge |I|
\]
for every $I\subseteq \{1,\dots,n\}$.
\end{theorem}

\begin{proof}[Proof of Lemma \ref{040201}]
For each $i\in\{1,\dots,n\}$, let $A_i:=\{\bold{v_i},\bold{w_i}\}\subset \mathbb{Q}^n.$ By Rado's theorem, there exists a choice
$\bold{u_i}\in A_i$ $(1\le i\le n)$ such that $\bold{u_1},\dots,\bold{u_n}$ are linearly independent if and only if, for every subset $I\subseteq \{1,\dots,n\}$,
\[
\dim_{\mathbb{Q}} \mathrm{span}\bigl(\bigcup_{i\in I} A_i\bigr)\ge |I|.
\]
By hypothesis, since no such independent choice exists, there is a nonempty
subset $I\subseteq \{1,\dots,n\}$ such that
\begin{equation}\label{eq:rank-defect}
\dim_{\mathbb{Q}} \mathrm{span}\bigl(\bigcup_{i\in I} A_i\bigr)<|I|.
\end{equation}
Choose $I$ minimal with respect to inclusion among all subsets satisfying
\eqref{eq:rank-defect}.

If $|I|=1$, say $I=\{i\}$, then \eqref{eq:rank-defect} yields
\[
\dim_{\mathbb{Q}} \mathrm{span}\{\bold{v_i},\bold{w_i}\}<1,
\]
implying $\bold{v_i}=\bold{w_i}=0$. Hence the conclusion is immediate in this case. 

Now suppose $|I|\geq 2$ and let $i\in I$ be arbitrary. By the minimality of $I$, 
\begin{equation}\label{eq:lower-bound}
\dim_{\mathbb{Q}} \mathrm{span}\bigl(\bigcup_{j\in I\setminus\{i\}} A_j\bigr)\ge |I|-1.
\end{equation}
On the other hand, since
\[
\mathrm{span}\bigl(\bigcup_{j\in I\setminus\{i\}} A_j\bigr)
\subseteq
\mathrm{span}\bigl(\bigcup_{j\in I} A_j\bigr)
\]
and
\begin{equation*}
\dim_{\mathbb{Q}} \mathrm{span}\bigl(\bigcup_{j\in I\setminus\{i\}} A_j\bigr)<|I| \quad (\text{by}\;\; \eqref{eq:rank-defect}),  
\end{equation*}
combining it with \eqref{eq:lower-bound}, it follows that 
\begin{equation}\label{eq:exact-rank}
\dim_{\mathbb{Q}} \mathrm{span}\bigl(\bigcup_{j\in I\setminus\{i\}} A_j\bigr)=|I|-1.
\end{equation}

Consequently, we get the desired result by letting 
$I\setminus\{i\}=\{i_1,\dots,i_m\}$. This proves the lemma.
\end{proof}

As an initial application of Lemma \ref{040201}, we prove the following, which will be utilized directly in the proof of Proposition \ref{17062401} in the next subsection.
\begin{lemma}\label{17052301}
Let
\begin{gather}\label{17052401}
\begin{array}{c}
(a_{ij}\;\; b_{ij})_{1\le i,j \le 2} 
\end{array} 
\end{gather}
be an integer matrix of rank $2$, and $\tau_1,\tau_2$ be algebraic numbers such that $1,\tau_1,\tau_2,\tau_1\tau_2$ are linearly independent over $\mathbb{Q}$. 
If the rank of the following matrix  
\begin{equation}\label{22032101}
\begin{array}{c}
(a_{ij}+b_{ij}\tau_j)_{1\le i,j \le 2} 
\end{array} 
\end{equation}
is equal to $1$, then either $a_{i1}=b_{i1}=0$ or $a_{i2}=b_{i2}=0$ ($i=1,2$). 
\end{lemma}
\begin{proof}
Let   
\begin{equation*}
\bold{v_j}:=\left(\begin{array}{c}
a_{1j}\\
a_{2j}
\end{array}
\right)\quad\text{and}\quad 
\bold{w_j}:=\left(\begin{array}{c}
b_{1j}\\
b_{2j}
\end{array}
\right)
\end{equation*}
for $1\le j\le 2$. Since the rank of \eqref{22032101} is $1$, equivalently, we have
\begin{equation*}
\det\left(\begin{array}{cc}
\vert & \vert \\
\bold{u_1} & \bold{u_2}\\
\vert & \vert 
\end{array}\right)=0
\end{equation*}
where $\bold{u_j}=\bold{v_j}$ or $\bold{w_j}$ for each $1\leq j\leq 2$. Thus the rank of either  
\begin{equation}\label{22032103}
\begin{array}{c}
(a_{i1}\;\; b_{i1})_{1\le i \le 2} 
\end{array} 
\quad \text{or}\quad 
\begin{array}{c}
(a_{i2}\;\; b_{i2})_{1\le i \le 2} 
\end{array} 
\end{equation} 
is at most $1$ by Lemma \ref{040201}. Without loss of generality, we assume the first case and $a_{21}=b_{21}=0$ by applying the Gauss elimination if necessary. Again, as the determinant of \eqref{22032101} is $0$, it follows that either $a_{11}+\tau_1b_{11}=0$ or $a_{22}+b_{22}\tau_2=0$, and hence $a_{11}=b_{11}=0$ or $a_{22}=b_{22}=0$ respectively in \eqref{22032101}. If $a_{22}=b_{22}=0$, it contradicts the fact that the rank of \eqref{17052401} is $2$ and so $a_{11}=b_{11}=0$. 

Similarly, if the rank of the second matrix in \eqref{22032103} is at most $1$, then $a_{i2}=b_{i2}=0$ for $i=1,2$.
\end{proof}

\subsection{Codimension $1$}\label{26042202}
We prove Theorems \ref{19072301} and \ref{19082701} under the assumption that the given anomalous subvarieties all have codimension $1$. The proofs in these cases are not only simpler than in the general setting, but also illustrate how the ideas presented in Theorems \ref{potential} and \ref{struc} are applied to obtain the desired results. Thus, they provide a useful overview of the general proof strategy.

First, using Lemma \ref{17052301}, we prove the following result, which can be regarded as a special case of Theorem \ref{19072301}.

\begin{proposition}\label{17062401}
Let $\mathcal{M}$ be an $n$-cusped ($n\geq 2$) hyperbolic $3$-manifold having rationally independent cusp shapes and $\mathcal{X}$ be its holonomy variety. Let $H$ be an irreducible algebraic subgroup of codimension $2$ such that $\mathcal{X}\cap H$ is an anomalous subvariety of $\mathcal{X}$ containing $(1,\dots, 1)$. Then  
\begin{equation}\label{17062402}
\mathcal{X}\cap H=\mathcal{X}\cap (M_j=L_j=1)
\end{equation}
for some $1\leq j\leq n$.
\end{proposition}
\begin{proof}
Let $H$ be defined by
\begin{equation}\label{17052308}
\begin{gathered}
M_1^{a_{i1}}L_1^{b_{i1}}\cdots M_n^{a_{in}}L_n^{b_{in}}=1 \quad (i=1,2).
\end{gathered}
\end{equation}
As remarked in Section \ref{A-poly}, $X\cap H$ is locally biholomorphic to $\log(\mathcal{X}\cap H)$ given by
\begin{equation}\label{17052101}
\begin{gathered}
a_{i1}u_1+b_{i1}(\tau_1u_1+\cdots)+\cdots+a_{in}u_n+b_{in}(\tau_nu_n+\cdots)=0\quad (i=1,2).
\end{gathered}
\end{equation}
Since $\mathcal{X}\cap H$ is an $(n-1)$-dimensional variety, \eqref{17052101} is an $(n-1)$-dimensional complex manifold and thus the rank of  
\begin{equation*}
\begin{array}{c}
(a_{ij}+\tau_jb_{ij})_{1 \le i\le 2,\; 1\le j \le n} 
\end{array} 
\end{equation*}
is equal to $1$. By Lemma \ref{17052301}, for $1\leq j\neq k\leq n $, we have either 
\begin{equation*}
a_{ij}=b_{ij}=0\quad \text{or}\quad a_{ik}=b_{ik}=0\quad (i=1,2).
\end{equation*}
In other words, \eqref{17052308} is reduced to 
\begin{equation*}
\begin{gathered}
M_j^{a_{ij}}L_j^{b_{ij}}=1\quad (i=1,2)
\end{gathered}
\end{equation*}
for some $1\leq j\leq n$. Since $a_{1j}b_{2j}-a_{2j}b_{1j}\neq 0$ and $(1, \dots, 1)\in \mathcal{X}\cap H$, the result follows.  
\end{proof}

Using Proposition \ref{17062401}, we prove a special case of Theorem \ref{19082701}. The result is also viewed as a straightforward generalization of Theorem \ref{20033003}. 

\begin{proposition}\label{17061701}
Let $\mathcal{M}$ and $\mathcal{X}$ be the same as above. Suppose $\mathcal{X}^{oa}=\emptyset$ and, further, $\mathcal{X}$ has infinitely many maximal anomalous subvarieties of dimension $n-1$. Then $\mathcal{M}$ has a cusp which is SGI from the rest.  
\end{proposition}
\begin{proof}
By Theorem \ref{struc}, there exists an irreducible algebraic subgroup $H$ of codimension $2$ such that those anomalous subvarieties are contained in translations of $H$. By Theorem \ref{19072301}, $H$ is $M_j=L_j=1$ for some $1\leq j\leq n$ and, without loss of generality, let us assume $j=1$. If 
\begin{equation*}
\mathcal{X}\cap (M_1=\xi_1,\; L_1=\xi_2)\quad (\xi_1, \xi_2 \in \mathbb{G})
\end{equation*}
is an $(n-1)$-dimensional anomalous subvariety of $\mathcal{X}$, equivalently, 
\begin{equation*}
u_1=\log\xi_1, \; v_1=\log\xi_2
\end{equation*}
is also an $(n-1)$-dimensional analytic subset of $\log\mathcal{X}$. However, this is possible if and only if $v_1$ depends solely on $u_1$. That is, the first cusp is SGI from the rest. 
\end{proof}

\subsection{Proof of Theorem \ref{19072301}}\label{26042203}
We now prove Theorem \ref{19072301}. Throughout this subsection, let $\mathcal{M}$ and $\mathcal{X}$ be as in the statement of the theorem.

In the following proposition, we first establish the theorem under the assumption that the codimension of a given algebraic subgroup is less than or equal to $n$, the dimension of $\mathcal{X}$. This is the essential case, and the general case will then follow almost as a corollary.

\begin{proposition}\label{anomalous3}
Let $H$ be an irreducible algebraic subgroup given by 
\begin{equation*}
\begin{gathered}
M_1^{a_{i1}}L_1^{b_{i1}}\cdots M_n^{a_{in}}L_n^{b_{in}}=1\quad (1\leq i\leq m),
\end{gathered}
\end{equation*}
where $n\geq m$. Then $\mathcal{X}\cap H$ is an anomalous subvariety of $\mathcal{X}$ if and only if the rank of 
\begin{equation}\label{17121403}
\begin{gathered}
\begin{array}{c}
(a_{ij}+\tau_j b_{ij})_{1\le i\le m,\;1\le j\le n} 
\end{array} 
\end{gathered}
\end{equation}
is strictly less than $m$. Moreover, every anomalous subvariety $\mathcal{X}\cap H$ of $\mathcal{X}$ satisfies
\begin{align*}
\mathcal{X}\cap H \subset (M_{j}=L_{j}=1) 
\end{align*}
for some $1\leq j\leq n$. 
\end{proposition}
\begin{proof}
The `only if' direction is clear. Indeed, if the rank of \eqref{17121403} is $m$, by the implicit function theorem, $\dim \big(\log(\mathcal{X}\cap H)\big)=n-m$ and so $\dim (\mathcal{X}\cap H)=n-m$. However, as $\dim H=2n-m$ and $\dim \mathcal{X}=n$ in $\mathbb{G}^{2n}$, if $\mathcal{X}\cap H$ is an anomalous subvariety of $\mathcal{X}$, then the dimension of $\mathcal{X}\cap H$ must be strictly larger than $n-m$, according to \eqref{20031301}. But this is a contradiction.  

Now we prove the `if' direction. For each fixed $n$, we prove the theorem by induction on $m$. Note that the theorem is true for any $n\geq 2$ and $m=2$ by Proposition \ref{17062401}. Now assume $m\geq 3$ and the statement holds for $2, \dots, m-1$. We show that the result is true for $m$ as well. 
 
Since the rank of \eqref{17121403} is less than $m$, the determinant of 
\begin{equation}\label{22031105}
\begin{array}{c}
(a_{ij}+\tau_j b_{ij})_{1 \le i, j \le m}
\end{array}
\end{equation}
is $0$. Since $\mathcal{M}$ has rationally independent cusp shapes by the assumption, if   
\begin{equation*}
\bold{v_j}:=\begin{array}{c}
(a_{ij})_{1\leq i\leq m}
\end{array}, \quad 
\bold{w_j}:=\begin{array}{c}
(b_{ij})_{1\leq i\leq m}
\end{array},
\end{equation*}
then 
\begin{equation*}
\det\left(\begin{array}{ccc}
\vert & &\vert \\
\bold{u_1} & \cdots &\bold{u_m}\\
\vert & &\vert 
\end{array}\right)=0
\end{equation*}
where $\bold{u_j}=\bold{v_j}$ or $\bold{w_j}$ for each $1\leq j\leq m$. By Lemma \ref{040201}, there is 
\begin{equation*}
\{j_1,\dots,j_l\}\subsetneq\{1,\dots,m\}\quad (l<m) 
\end{equation*}
such that the rank of 
\begin{equation*}
\begin{array}{c}
(a_{ij_k}\;\; b_{ij_k})_{1 \le i\le m,\; 1\le k \le l} 
\end{array} 
\end{equation*}
is at most $l$. Let $l$ be the smallest number possessing this property and, without loss of generality, assume $j_k=k$ for $1\leq k\leq l$. Applying Gauss elimination if necessary, we further suppose the coefficient matrix of $H$ and the matrix in \eqref{17121403} are given as  
\[\left(
\begin{array}{c|c}
(a_{ij}\;\; b_{ij})_{1 \le i, j \le l} 
&
(a_{ij}\;\; b_{ij})_{1 \le i \le l,\; l+1 \le j \le n} \\
\hline
0 
&
(a_{ij}\;\; b_{ij})_{l+1 \le i \le m,\; l+1 \le j \le n}
\end{array}
\right)
\]
and
\begin{equation}\label{17071302}
\left(\begin{array}{c|c}
(a_{ij}+\tau_j b_{ij})_{1 \le i, j \le l} 
&
(a_{ij}+\tau_j b_{ij})_{1 \le i \le l, \;l+1 \le j \le n} \\
\hline
0  
&
(a_{ij}+\tau_j b_{ij})_{l+1 \le i \le m, \;l+1 \le j \le n}
\end{array}\right)
\end{equation}
respectively.
\begin{enumerate}
\item If the rank of  
\begin{equation}\label{17072001}
\begin{array}{c}
(a_{ij}+\tau_j b_{ij})_{1 \le i, j \le l}
\end{array}
\end{equation}
is $l$, then the rank of the following submatrix
\begin{equation*}
\begin{array}{c}
(a_{ij}+\tau_j b_{ij})_{l+1 \le i \le m, \;l+1 \le j \le n}
\end{array}
\end{equation*}
of \eqref{17071302} is strictly less than $m-l$ (otherwise, it contradicts the fact that the rank of \eqref{17071302} is strictly less than $m$). If $H'$ is an algebraic subgroup  defined by 
\begin{equation*}
\begin{gathered}
M_{l+1}^{a_{i(l+1)}}L_{l+1}^{b_{i(l+1)}}\cdots M_n^{a_{in}}L_n^{b_{in}}=1\quad (l+1\leq i\leq m),
\end{gathered}
\end{equation*} 
by induction, $\mathcal{X}\cap H'$ is an anomalous subvariety of $\mathcal{X}$ containing $\mathcal{X}\cap H$ and contained in 
\begin{equation*}
\begin{gathered}
M_j=L_j=1
\end{gathered}
\end{equation*}
for some $l+1\leq j\leq m$.

\item Suppose the rank of \eqref{17072001} is strictly less than $l$. By Lemma \ref{040201}, there exists
\begin{equation*}\label{17071402}
\{j_1,\dots,j_{h}\}\subsetneq\{1,\dots,l\}
\end{equation*}
such that the rank of 
\begin{equation*}
\begin{array}{c}
(a_{ij_k}\;\; b_{1j_k})_{1\le i\le l, \; 1\le k\le h}
\end{array} 
\end{equation*}
is strictly less than $h$. But this contradicts the assumption on $l$.  
\end{enumerate}
\end{proof}

Now we complete the proof of Theorem \ref{19072301}, restated simply as follows:\\

\noindent\textbf{Theorem \ref{19072301}.}
\textit{If $H$ is an irreducible algebraic subgroup such that $\mathcal{X}\cap H$ is an anomalous subvariety of $\mathcal{X}$, then  
\begin{equation*}
\mathcal{X}\cap H\subset (M_j=L_j=1)
\end{equation*}
for some $1\leq j\leq n$. }
\begin{proof}
Let $H$ be defined by
\begin{equation}\label{20050401}
\begin{gathered}
M_1^{a_{i1}}L_1^{b_{i1}}\cdots M_n^{a_{in}}L_n^{b_{in}}=1,\quad (1\leq i\leq m).
\end{gathered}
\end{equation}
For $n\geq m$, the theorem was proved in Proposition \ref{anomalous3}, so we assume $m>n$. If $H'$ is an algebraic subgroup defined by the first $n$ equations in \eqref{20050401}, then $\log(\mathcal{X}\cap H')$ is given as
\begin{equation}\label{17072604} 
\begin{gathered}
a_{i1}u_1+b_{i1}(\tau_1u_1+\cdots)+\cdots+a_{in}u_n+b_{in}(\tau_nu_n+\cdots)=0,\quad (1\leq i\leq n)
\end{gathered}
\end{equation}
and the Jacobian of \eqref{17072604} at $(0,\dots,0)$ is
\begin{equation}\label{17072605}
\begin{gathered}
\begin{array}{c}
(a_{ij}+\tau_j b_{ij})_{1\le i,j\le n} 
\end{array} 
\end{gathered}
\end{equation}

If the determinant of \eqref{17072605} is nonzero, by the inverse function theorem, \eqref{17072604} is equivalent to
\begin{equation*}
u_1=\cdots=u_n=0,
\end{equation*}
implying
\begin{equation*}
\mathcal{X}\cap H'=\mathcal{X}\cap H= \mathcal{X}\cap (M_1=\cdots=M_n=1).
\end{equation*}
But this contradicts the fact that the dimension of $\mathcal{X}\cap H$ is positive. 

If the determinant of \eqref{17072605} is zero, by Proposition \ref{anomalous3}, 
\begin{equation*}
\mathcal{X}\cap H'\subset (M_j=L_j=1)
\end{equation*} 
for some $1\leq j\leq n$. This completes the proof of the theorem.
\end{proof}

\section{Main Result II}

In this section, we prove our second main result, Theorem~\ref{19082701}. Before proceeding with the proof, some preliminaries are needed. In Subsection~\ref{25062701}, we introduce a definition and prove a proposition that will be used in the proof of the theorem. We then use these to present the proof of Theorem~\ref{19082701} in the following subsection.

\subsection{Preliminary proposition}\label{25062701}

Let us begin by refining the definition of an anomalous subvariety given in Definition \ref{20040802}: 
\begin{definition}
Let $\mathcal{X}$ be an irreducible variety in $\mathbb{G}^n$ and $b\geq 0$ be an integer. We say an irreducible subvariety $\mathcal{Y}$ of $\mathcal{X}$ is $b$-anomalous if it lies in an algebraic coset $H (\subset \mathbb{G}^n)$ satisfying
\begin{equation*}
\dim \mathcal{Y}=\dim H+\dim \mathcal{X} -n+b.
\end{equation*}
\end{definition}
The above definition first appeared in \cite{BMZ1}. In the original definition, $b$ is assumed to be positive, but $b=0$ is allowed in our case. (This is for the sake of convenience in the proof of the proposition below.) Note that $0$-anomalous subvarieties are not anomalous in the sense of Definition \ref{20040802}. 

For instance, if $\mathcal{X}$ is the holonomy variety of an $n$-cusped hyperbolic $3$-manifold, then 
\begin{equation*}
\mathcal{X}\cap (M_{j_1}=L_{j_1}=\cdots=M_{j_m}=L_{j_m}=1)\quad (0<m<n)
\end{equation*}
is an $m$-anomalous subvariety of $\mathcal{X}$. 

In general, let $H^{(m)}$ be an algebraic subgroup of codimension $m$ defined by 
\begin{equation}\label{22031101}
M_1^{a_{i1}}L_1^{b_{i1}}\cdots M_m^{a_{im}}L_m^{b_{im}}=1\quad (1\leq i\leq m). 
\end{equation}
If the rank of the Jacobian matrix associated to $\log (\mathcal{X}\cap H^{(m)})$ is $m$, then 
\begin{equation*}
\mathcal{X}\cap H^{(m)}=\mathcal{X}\cap (M_1=\cdots =M_m=1)
\end{equation*} 
by Proposition \ref{anomalous2}, that is, $\mathcal{X}\cap H^{(m)}$ is a $0$-anomalous variety of $\mathcal{X}$. Further, if we add 
\begin{equation}\label{22031102}
M_1^{a_{i1}}L_1^{b_{i1}}\cdots M_m^{a_{im}}L_m^{b_{im}}=1\quad (m+1\leq i\leq m+b)
\end{equation}
on top of \eqref{22031101} and define $H^{(m+b)}$ as an algebraic subgroup of codimension $m+b$ by \eqref{22031101}-\eqref{22031102}, then
\begin{equation*}
\mathcal{X}\cap H^{(m)}=\mathcal{X}\cap H^{(m+b)}=\mathcal{X}\cap (M_1=\cdots =M_m=1)
\end{equation*}
and thus $\mathcal{X}\cap H^{(m+b)}$ is a $b$-anomalous subvariety of $\mathcal{X}$. In the following, we show that this is always the case, that is, every $b$-anomalous subvariety of $\mathcal{X}$ arises in this manner.  \\

\noindent\textbf{Convention.} To simplify notation, we write $M_{[i,j]}=1$ to denote $M_i = \cdots = M_j = 1$.

\begin{proposition}\label{20040601}
Let $\mathcal{M}$ and $\mathcal{X}$ be the same as in Theorem \ref{19082701}. Let $H$ be an irreducible algebraic subgroup such that $\mathcal{X}\cap H$ is a $b$-anomalous, but not $(b+1)$-anomalous, subvariety of $\mathcal{X}$ and suppose that
\begin{equation*}
\begin{gathered}
\mathcal{X}\cap H\subset (M_{[1,m]}=1),\quad \mathcal{X}\cap H\not\subset (M_{j}=1) 
\end{gathered}
\end{equation*}
for $j\in \{m+1, \dots, n\}$. Then $b\leq m$ and there exists algebraic subgroup $H^{(m+b)}$ and $H^{(m)}$ of codimension $m+b$ and $m$ respectively such that 
\begin{enumerate}
\item $H\subset H^{(m+b)}\subset H^{(m)}$,\;\; $\mathcal{X}\cap H^{(m+b)}=\mathcal{X}\cap H^{(m)}=\mathcal{X}\cap (M_{[1,m]}=1)$; 
\item $H^{(m+b)}$ is defined by equations involving only the variables $M_j, L_j$ for $1\leq j\leq m$, i.e., $H^{(m+b)}$ is defined by equations of the following form:
\begin{equation*}
M_1^{a_{i1}}L_1^{b_{i1}}\cdots M_m^{a_{im}}L_m^{b_{im}}=1, \quad (1\leq i\leq m+b).
\end{equation*}
\end{enumerate}
\end{proposition}

To prove the proposition - or more precisely, to construct $H^{(m+b)}$ and $H^{(m)}$ as stated - we use induction on $n$ and $m$, along with a simple trick involving a canonical projection. To elaborate on this, for $\mathcal{X}$ and $H$ are given as above (with $n\geq 2$), let\footnote{If $H$ is defined by
\begin{equation*}
M_1^{a_{i1}}L_1^{b_{i1}}\cdots M_n^{a_{in}}L_n^{b_{in}}=1,\quad (1\leq i\leq h), 
\end{equation*}
then $H_1$ is simply defined by
\begin{equation*}
M_2^{a_{i2}}L_2^{b_{i2}}\cdots M_n^{a_{in}}L_n^{b_{in}}=1,\quad (1\leq i\leq h). 
\end{equation*}}  
\begin{equation}\label{20040603}
\mathcal{X}_1:=\mathcal{X}\cap (M_1=L_1=1)\quad \text{and}\quad H_1:=H\cap (M_1=L_1=1).   
\end{equation}
Note that $H_1$ satisfies $\dim H-2\leq \dim H_1 \leq \dim H$, and if $\dim H-\dim H_1$ is either $2$ or $1$, then $H$ is contained in either 
\begin{equation}\label{25062401}
M_1=L_1=1\quad \text{or}\quad M_1^cL_1^d=1
\end{equation}
respectively, for some $c,d\in \mathbb{Z}$. We further suppose $\mathcal{X}_{1}$ and $H_{1}$ are in $\mathbb{G}^{2(n-1)}$ via the projection 
\begin{equation*}
\text{Pr}\;:(M_1, L_1, \dots, M_n, L_n)\longrightarrow (M_2, L_2, \dots, M_n, L_n)
\end{equation*}
and regard $\mathcal{X}_{1}$ as the holonomy variety of an $(n-1)$-cusped hyperbolic $3$-manifold. If $\mathcal{X}_1\cap H_1$ is an anomalous subvariety of $\mathcal{X}_1$ (in $\mathbb{G}^{2n-2}$), by the induction hypothesis, there will be an algebraic subgroup containing $H_1$, which can be lifted to an algebraic subgroup containing $H$. Combining this with the one already given in \eqref{25062401}, we will finally be able to construct the desired one $H^{(m+b)}$ (as well as $H^{(m)}$), as stated in the proposition. Of course, a more careful case-by-case analysis of the possible values of $\dim H$, $\dim(\mathcal{X} \cap H)$, and $\dim H_1$ will be required in the actual proof below.\\

\noindent\textbf{Remark. }If $H^{(m)}$ is given as in the statement of Proposition~\ref{20040601}, since $\operatorname{codim} H^{(m)} = m$ and $\dim(\mathcal{X} \cap H^{(m)}) = n - m$, the rank of the Jacobian of $\log(\mathcal{X} \cap H^{(m)})$ is $m$ by Proposition~\ref{anomalous3}. Moreover, since $H^{(m+b)} \subset H^{(m)}$ and $H^{(m+b)}$ is defined by equations involving only $M_j, L_j$ for $1 \leq j \leq m$, the rank of the Jacobian of $H^{(m+b)}$ is also $m$. We will use this fact in the proof below.\\

\begin{proof}[Proof of Proposition \ref{20040601}]
As already mentioned, we proceed by induction on $n$ and $m$. The claim is clearly true when $n = m = 1$.

\begin{enumerate}
\item Suppose $n\geq 2$ and $m=1$. It is enough to show either 
\begin{equation*}
b=1,\quad H\subset (M_1=L_1=1) \quad \text{or} \quad b=0, \quad H\subset (M_1^cL_1^d=1)
\end{equation*}
for some $c,d\in \mathbb{Z}$. Let $\mathcal{X}_{1}, H_{1}(\in \mathbb{G}^{2n-2})$ be as given in \eqref{20040603} and set
$$a:=\dim H-\dim H_{1}\;\; (\in \{0,1,2\}).$$ 
Then 
\begin{equation*}
\begin{gathered}
\dim \mathcal{X}\cap H=\dim \mathcal{X}_{1}\cap H_{1}=\dim\mathcal{X}+\dim H-2n+b, 
\end{gathered}
\end{equation*}
which is  
\begin{equation}\label{20040701}
\begin{gathered}
(\dim\mathcal{X}_{1}+1)+(\dim H_{1}+a)-2n+b=\dim \mathcal{X}_{1}+\dim H_{1}-2(n-1)+a+b-1. 
\end{gathered}
\end{equation}
\begin{enumerate}
\item If $b\geq 2$, then 
\begin{equation*}
\dim \mathcal{X}_{1}\cap H_{1}\geq \dim \mathcal{X}_{1}+\dim H_{1}-2(n-1)+1 
\end{equation*}
and so $\mathcal{X}_{1}\cap H_{1}$ is an anomalous subvariety of $\mathcal{X}_{1}$ (in $\mathbb{G}^{2(n-1)}$). By Theorem \ref{19072301}, there exists some $j$ ($2\leq j\leq n$) such that 
\begin{equation*}
\mathcal{X}_{1}\cap H_{1}\subset (M_j=L_j=1), 
\end{equation*}
which contradicts the assumption on $m$. 

\item For $b=a=1$, one gets the same contradiction as above. For $b=1$ and $a=0$, $\dim H=\dim H_{1}$, which implies $H\subset (M_1=L_1=1)$. By letting $H^{(2)}$ be $M_1=L_1=1$, the result follows.
 
\item If $b=0$, then 
\begin{equation*}
\dim \mathcal{X}_{1}\cap H_{1}=\dim \mathcal{X}_{1}+\dim H_{1}-2(n-1)+a-1.
\end{equation*}
If $a=0$, then this contradicts a well-known result in intersection theory (c.f. Proposition 3.28 in \cite{mum}). If $a=2$, then $\mathcal{X}_{1}\cap H_{1}$ is an anomalous subvariety of $\mathcal{X}_{1}$, again contradicting the assumption on $m$. If $a=1$, i.e., $\dim H=1+\dim H_{1}$, then $H\subset (M_1^cL_1^d=1)$ for some $c,d\in \mathbb{Z}$ by the definition of $H_{1}$. Letting $H^{(1)}$ be defined by $M_1^cL_1^d=1$, the claim follows. 
\end{enumerate}

\item Now suppose $n\geq m\geq 2$ and the claim holds for any $\mathcal{X}$, $H$ satisfying either $\dim \mathcal{X}<n$ or  
\begin{equation*}
\begin{gathered}
\dim \mathcal{X}=n, \quad \mathcal{X}\cap H\subset (M_{j_1}=\cdots=M_{j_l}=1), \quad \mathcal{X}\cap H\not\subset (M_{j}=1) 
\end{gathered}
\end{equation*}
where $l<m$ and $j\in \{1, \dots, n\}\backslash\{j_1, \dots, j_l\}$. 

Let $\mathcal{X}_{1},H_{1}$ and $a$ be the same as above.\footnote{That is, $\mathcal{X}_{1}\cap H_{1}$ is an $(a+b-1)$-anomalous subvariety of $\mathcal{X}_{1}$ in $\mathbb{G}^{2n-2}$.} Since $\dim \mathcal{X}_{1}=n-1$, by the induction hypothesis, 
\begin{equation*}
b+a-1\leq m-1 \Longrightarrow b\leq m
\end{equation*}
and there exists an algebraic subgroup $H^{(m+a+b-2)}_{1}$ of codimension $(m-1)+(a+b-1)$ satisfying 
\begin{equation}\label{21081301}
H_{1}\subset H^{(m+a+b-2)}_{1}, \quad \mathcal{X}_{1}\cap H^{(m+a+b-2)}_{1}=\mathcal{X}_{1}\cap (M_{[2, m]}=1).
\end{equation}
By the definition of $H_{1}$, one further obtains an algebraic subgroup $H^{(m+a+b-2)}$ in $\mathbb{G}^{2n}$ containing $H$, which satisfies
\begin{equation}\label{21081601}
\big(\mathcal{X}\cap H^{(m+a+b-2)} \cap (M_1=1)\big)=\big(\mathcal{X}\cap (M_{[1, m]}=1)\big).
\end{equation}
\begin{enumerate}
\item If $a=1$, it means $H\subset (M_1^cL_1^d=1)$ for some $c,d\in \mathbb{Z}$ and so $H^{(m+b-1)}\cap(M_1^cL_1^d=1)$ is an algebraic subgroup of codimension $m+b$ containing $H$. Since 
\begin{equation*}
\mathcal{X}\cap (M_1^cL_1^d=1)\subset (M_1=1),
\end{equation*}
we get the desired result by letting $H^{(m+b)}:=H^{(m+b-1)}\cap (M_1^cL_1^d=1)$. The existence of $H^{(m)}$ is immediate from the induction, while the second claim concerning $H^{(m+b)}$ is also clear from its construction. 

\item If $a=0$, then $H\subset (M_1=L_1=1)$. Similar to the previous case, the conclusion follows by setting $H^{(m+b)}:=H^{(m+b-2)}\cap (M_1=L_1=1)$. 

\item If $a = 2$, then $H^{(m+b)}$ itself is an algebraic subgroup of codimension \( m+b \) containing \( H \). To complete the proof, it suffices to show the existence of an algebraic subgroup \( H^{(m)} \) that satisfies the required conditions stated in the proposition.

Let \( H(m) \) be any algebraic subgroup of codimension \( m \) containing \( H^{(m+b)} \). If \( \dim(\mathcal{X} \cap H(m)) = n - m \), then the rank of the Jacobian of \( \log(\mathcal{X} \cap H(m)) \) is \( m \) by Proposition~\ref{anomalous3}, and so
\begin{equation}\label{25070201}
\mathcal{X} \cap H(m) = \mathcal{X} \cap (M_{[1,m]} = 1)
\end{equation}
by Proposition \ref{anomalous2}, implying \( H(m)\) itself is exactly the desired one. Otherwise, \( \mathcal{X} \cap H(m) \) is an anomalous subvariety of \( \mathcal{X} \), and hence by Proposition~\ref{anomalous3},
\begin{equation*}
\mathcal{X} \cap H(m) \subset (M_{j_1} = \cdots = M_{j_l} = 1)
\end{equation*}
for some indices \( 1 \leq j_k \leq m \) with \( 1 \leq k \leq l < m \). Without loss of generality, we assume \( l \) is maximal and \( \{j_1, \dots, j_l\} = \{m - l + 1, \dots, m\} \). By induction, there exists an algebraic subgroup \( H^{(l)} \) of codimension $l$ containing \( H(m) \) and
\begin{equation}\label{25070303}
\mathcal{X} \cap H^{(l)} = \mathcal{X} \cap (M_{[m - l + 1, m]} = 1).
\end{equation}
Now, let 
\begin{equation*}
\mathcal{X}_l:=\mathcal{X}\cap (M_{[m-l+1, m]}=1)\; \text{and}\; H_l:=H\cap (M_{[m-l+1, m]}=L_{[m-l+1, m]}=1)
\end{equation*}
and consider both $\mathcal{X}_l$ and $H_l$ as subvarieties in $\mathbb{G}^{2n-2l}$. Since 
\begin{equation*}
\mathcal{X}_l \cap H_l \subset (M_{[1,m-l]}=1),  
\end{equation*}
we again apply induction to obtain $H_l^{(m-l)}$ of codimension $m-l$, which contains $H_l$ and satisfies 
\begin{equation}\label{25070302}
\mathcal{X}_l\cap H_l^{(m-l)}=\mathcal{X}_l\cap (M_{[1,m-l]}=1). 
\end{equation}
By the definition of $H_l$, $H_l^{(m-l)}$ can be further lifted to an algebraic subgroup $H^{(m-l)}$ (of the same codimension) containing $H$ such that\footnote{Note that, since $H^{(m-l)}$ is defined by equations in the variables $M_i, L_i$ for $1\leq i\leq m$, it contains $H^{(m+b)}$ as well.}
\begin{equation}\label{25070301}
H^{(m-l)}\cap (M_{[m-l+1, m]}=L_{[m-l+1, m]}=1)=H_l^{(m-l)}.
\end{equation} 
Finally, define $H^{(m)}:=H^{(l)}\cap H^{(m-l)}$, which then satisfies the desired conditions by \eqref{25070303}-\eqref{25070301}. 
\end{enumerate}
\end{enumerate}
\end{proof}

\subsection{Proof of Theorem \ref{19082701}}\label{20041401}
In this subsection, we prove our second main result, Theorem \ref{19082701}, by breaking it down into several distinct special cases.

First, if $\mathcal{X}^{oa}=\emptyset$, by Theorem \ref{struc}, there exists an irreducible algebraic subgroup $H$ such that $\mathcal{X}\cap H$ is a $1$-anomalous subvariety of $\mathcal{X}$ and $\mathcal{X}=\mathcal{Z}_H$, i.e., $\mathcal{X}$ is foliated by maximal anomalous subvarieties contained in 
\begin{equation}\label{19123001}
\bigcup_{g\in \mathcal{Z}_H}\mathcal{X}\cap gH.
\end{equation}
Let $m$ $(\geq 1)$ be the largest number such that 
\begin{equation}\label{21081315}
\begin{aligned}
\mathcal{X}\cap H&\subset (M_{j_1}=\cdots=M_{j_m}=1)\;\;\text{and}\;\;\mathcal{X}\cap H&\not\subset (M_{j}=1) 
\end{aligned}
\end{equation}
for $j\in \{1, \dots, n\}\backslash\{j_1, \dots, j_m\}$. Without loss of generality, we assume $j_k=k$ for $1\leq k\leq m$. By Proposition \ref{20040601}, $H$ is contained in an algebraic subgroup $H^{(m+1)}$ (resp. $H^{(m)}$) of codimension $m+1$ (resp. $m$), defined by equations of the following form
\begin{equation}\label{21072903}
M_1^{a_{i1}}L_1^{b_{i1}}\cdots M_m^{a_{im}}L_m^{b_{im}}=1,\quad (0\leq i\leq m)\quad (\text{resp. }(1\leq i\leq m)) 
\end{equation}
and satisfying
\begin{equation}\label{21081212}
\mathcal{X}\cap H^{(m)}=\mathcal{X}\cap H^{(m+1)}=\mathcal{X}\cap (M_1=\cdots =M_m=1).
\end{equation} 
Thus if  
\begin{equation}\label{20050403}
\dim (\mathcal{X}\cap H)=n-m-l,\quad \text{codim } H=m+l+1
\end{equation} 
for some $l\geq0$, one may assume $H$ is defined by \eqref{21072903}, together with additional constraints of the form
\begin{equation}\label{20042901}
\begin{gathered}
M_1^{a'_{i1}}L_1^{b'_{i1}}\cdots  M_n^{a'_{in}}L_n^{b'_{in}}=1,\quad (1\leq i\leq l).
\end{gathered}
\end{equation}

We first claim 
\begin{lemma}\label{20040906}
Having the same notation and assumptions as above, there exists an analytic function $\Theta(s_1, \dots, s_m, t_{1},\dots, t_{l})$ such that 
\begin{equation}\label{19082903}
a_{01}u_1+b_{01}v_1+\cdots +a_{0m}u_m+b_{0m}v_m=\Theta(s_1, \dots, s_m, t_{1},\dots, t_{l})
\end{equation}
where 
\begin{equation}\label{21072901}
\begin{aligned}
s_i&=a_{i1}u_1+b_{i1}v_1+\cdots +a_{im}u_m+b_{im}v_m\quad (1\leq i\leq m),\\
t_i&=a'_{i1}u_1+b'_{i1}v_1+\cdots +a'_{in}u_n+b'_{in}v_n \quad (1\leq i\leq l).
\end{aligned}
\end{equation}
\end{lemma}

\begin{proof}
Since $\mathcal{X}$ is foliated by anomalous subvarieties in \eqref{19123001}, equivalently, $\log\mathcal{X}$ is foliated by elements in $\bigcup_{g\in \mathcal{Z}_H}\log(\mathcal{X}\cap gH)$. As each $\log(\mathcal{X}\cap gH)$ is defined by equations of the following types
\begin{equation}\label{19081601}
\begin{aligned}
\zeta_i&=a_{i1}u_1+b_{i1}v_1+\cdots +a_{im}u_m+b_{im}v_m\quad (0\leq i\leq m),\\
\zeta'_i&=a'_{i1}u_1+b'_{i1}v_1+\cdots+a'_{in}u_n+b'_{in}v_n\quad (1\leq i\leq l).
\end{aligned}
\end{equation}
where $\zeta_i, \zeta'_i\in \mathbb{C}$, if we set
\begin{equation*}
\begin{gathered}
T:=\{(\zeta_0, \dots, \zeta'_l)\in \mathbb{C}^{m+l+1}:\;\eqref{19081601} \text{ is a complex manifold of dimension }n-m-l \},
\end{gathered}
\end{equation*} 
then 
\begin{equation*}
\dim T=\dim(\log \mathcal{X})-\dim\big(\log(\mathcal{X}\cap H)\big)=n-(n-m-l)=m+l.
\end{equation*}
Thus $T$ is a hypersurface in $\mathbb{C}^{m+l+1}$ and this implies there exists $\Theta$ satisfying \eqref{19082903}.  
\end{proof}
Later in Section \ref{20040907}, it will be shown that $l=0$. At the moment, let us assume $l=0$ and consider the following two subcases:\\

\hspace{0.5cm} $\clubsuit$ $l=0$ and there is no $H'$ such that $H\subsetneq H'$ and $\mathcal{X}\cap H'$ is an anomalous subvariety of $\mathcal{X}$;\\

\hspace{0.5cm} $\spadesuit$ $l=0$ and there exists $H'$ such that $H\subsetneq H'$ and $\mathcal{X}\cap H'$ is an anomalous subvariety of $\mathcal{X}$.\\

In the first case, we show that cusps $1, \dots, m$ are SGI from the rest and, in the second, find a proper subset of cusps $1, \dots, m$ which are WGI from cusps $m+1, \dots, n$.

\subsubsection{$\clubsuit$ $\Longrightarrow$ SGI}\label{25070207}
In this subsubsection, we prove the following theorem:
\begin{theorem}\label{21072807}
Let $\mathcal{M}$, $\mathcal{X}$ and $H$ be the same as above. If $H$ satisfies the assumption in $\clubsuit$, then cusps $1, \dots, m$ of $\mathcal{M}$ are SGI from the rest.
\end{theorem}

The proof is by contradiction. If cusps $1, \dots, m$ are not SGI from the rest, there exists $v_i$ ($1\leq i\leq m$) having a term divisible by some $u_j$ ($m+1\leq j\leq n$). We find such a term of the lowest degree, compare two coefficients of the term in \eqref{19082903} and get an equality involving $a_{ij}, b_{ij}, \tau_j$ ($0\leq i,j\leq m$). By carefully analyzing and manipulating the equality, we will show the existence of $H'$ containing $H$ properly such that $\mathcal{X} \cap H'$ is an anomalous subvariety of $\mathcal{X}$, thereby contradicting the assumption made in $\clubsuit$.
 
\begin{proof}
Let
\begin{equation*}
\Phi(u_1, \dots, u_n)=\sum^{\infty}_{(\alpha_1, \dots, \alpha_n)\in(\mathbb{Z}^+)^n} c_{\alpha_1, \dots, \alpha_n}u_1^{\alpha_1}\cdots u_n^{\alpha_n}
\end{equation*}
be the Neumann-Zagier potential function of $\mathcal{M}$ and $S$ be the set of all $u_1^{\alpha_1}\cdots u_n^{\alpha_n}$ satisfying 
\begin{itemize}
\item $c_{\alpha_1, \dots, \alpha_n}\neq 0$ with $(\alpha_1, \dots, \alpha_{m})\neq (0, \dots, 0)$ and $(\alpha_{m+1}, \dots, \alpha_{n})\neq (0, \dots, 0)$;
\item $\alpha_1+\cdots +\alpha_{m}$ is minimal.
\end{itemize}
Note that $S=\emptyset$ if and only if cusps $1, \dots, m$ are SGI from cusps $m+1, \dots, n$. So assume $S\neq \emptyset$ and $u_1^{\alpha_1}\cdots u_n^{\alpha_n}\in S$. Without loss of generality, it is further supposed that $\alpha_1\neq 0$ and 
\begin{equation*}
\bold{u}:=\frac{1}{2}\alpha_1c_{\alpha_1, \dots, \alpha_n}u_1^{\alpha_1-1}\cdots u_n^{\alpha_n}.
\end{equation*} 
By Lemma \ref{20040906}, there exists an analytic function  
\begin{equation*}
\Theta(s_1, \dots, s_m):=e_1s_1+\cdots e_{m}s_{m}+\text{higher degrees}, 
\end{equation*}
satisfying \eqref{19082903}. Comparing the coefficients of $\bold{u}$ on both sides of \eqref{19082903}, we get
\begin{equation}\label{21072801}
b_{01}=\left( \begin{array}{c}
e_1   \\
\vdots   \\
e_{m}  
\end{array} \right)^{T}
\left( \begin{array}{c}
b_{11}   \\
\vdots   \\
b_{m1}  
\end{array} \right).
\end{equation}
If $b_{01}\neq 0$, then $b_{i1}\neq 0$ for some $i$ ($1\leq i\leq n$) by \eqref{21072801}. Hence, by applying Gauss elimination if necessary, we suppose $b_{01}=0$ in \eqref{21072903}, \eqref{19082903} and \eqref{21072801}. 

Comparing the coefficients of the linear terms of the both sides in \eqref{19082903}, it follows that
\begin{equation*}
\begin{gathered}
\left( \begin{array}{c}
a_{01}  \\
a_{02}+b_{02}\tau_2 \\
\vdots   \\
a_{0m}+b_{0m}\tau_m 
\end{array} \right)
=A^T\left( \begin{array}{c}
e_1   \\
\vdots   \\
e_{m}  
\end{array} \right)\Longrightarrow \left( \begin{array}{c}
e_1   \\
\vdots   \\
e_{m}  
\end{array} \right)
=A^{-T}
\left( \begin{array}{c}
a_{01}  \\
a_{02}+b_{02}\tau_2   \\
\vdots   \\
a_{0m}+b_{0m}\tau_m  
\end{array} \right)
\end{gathered}
\end{equation*}
where $A:=\begin{array}{c}
(a_{ij}+b_{ij}\tau_j)_{1\leq i,j\leq m}
\end{array}$.\footnote{Note that $A$ is the Jacobian of $\log (\mathcal{X}\cap H^{(m)})$ and it is invertible by the assumptions on $H^{(m)}$ and Proposition \ref{anomalous2}.} 
Combining it with \eqref{21072801}, we further get
\begin{equation}\label{19081610}
\begin{gathered}
0(=b_{01})=\left( \begin{array}{c}
e_1   \\
\vdots   \\
e_{m}  
\end{array} \right)^{T}
\left( \begin{array}{c}
b_{11}   \\
\vdots   \\
b_{m1}  
\end{array} \right)
=\left( \begin{array}{c}
a_{01}   \\
a_{02}+b_{02}\tau_2   \\
\vdots   \\
a_{0m}+b_{0m}\tau_m 
\end{array} \right)^{T}A^{-1}
\left( \begin{array}{c}
b_{11}   \\
\vdots   \\
b_{m1}  
\end{array} \right).
\end{gathered}
\end{equation}

We show the above equality \eqref{19081610} contradicts the condition in $\clubsuit$. Let 
\begin{equation*}
\bold{v_i}:=\begin{array}{ccc}
(a_{i1}+b_{i1}\tau_1   & \hdots & a_{im}+b_{im}\tau_{m}) 
\end{array} \quad (0\leq i\leq m).
\end{equation*}
By the inverse matrix formula, \eqref{19081610} is equivalent to
\begin{equation}\label{19080501}
\begin{gathered}
\small\small
-b_{11}\det
\left( \begin{array}{c}
\bold{v_0}\\
\bold{v_2}\\
\bold{v_3}\\
 \vdots\\
\bold{v_{m}}
\end{array} \right)
+\cdots+b_{m1}(-1)^m\det
\left( \begin{array}{c}
\bold{v_1}\\
\bold{v_2}\\
 \vdots\\
\bold{v_{m-1}}\\
\bold{v_{0}}
\end{array} \right)
=\sum_{i=1}^{m} b_{i1}(-1)^{i}\det
\left( \begin{array}{c}
\bold{v_1}\\
 \vdots\\
\bold{\hat{v_i}}\\
 \vdots \\
\bold{v_{m}}
\end{array} \right)=0
\end{gathered}
\end{equation}
where $\bold{\hat{v_i}}:=\bold{v_0}$ for each $i$. We claim
\begin{claim}\label{19123102}
$b_{i1}\neq 0$ for some $1\leq i\leq m$. 
\end{claim}
\begin{proof}[Proof of Claim \ref{19123102}]
On the contrary, suppose $b_{i1}=0$ for all $i$ ($1\leq i\leq m$). If $a_{i1}=0$ for all $0\leq i\leq m$, it contradicts our initial assumption that the rank of the Jacobian of $\log (\mathcal{X}\cap H^{(n)})$ is $m$ (see the remark below Proposition \ref{anomalous3}). Thus $a_{i1}\neq 0$ for some $i$ and, without loss of generality, we take $a_{m1}\neq 0$. Applying Gauss elimination if necessary, and further setting 
\begin{equation*}
a_{01}=\cdots=a_{(m-1)1}=0, 
\end{equation*}
the rank of the following Jacobian of $\log(\mathcal{X}\cap H)$,   
\begin{equation}\label{19081402}
\begin{gathered}
\left(\begin{array}{c|c}
0 & (a_{ij} + b_{ij} \tau_j)_{0\leq i\leq m-1, \; 2\leq j\leq m} \\
\hline
a_{m1} & (a_{mj} + b_{mj} \tau_j)_{2\leq j\leq m}
\end{array}
\right),
\end{gathered}
\end{equation}
is $m$ by the assumption. Therefore, the rank of the following submatrix  
\begin{equation}\label{21072803}
\begin{gathered}
\begin{array}{c}
(a_{ij}+b_{ij}\tau_j)_{0\leq i\leq m-1, \;2\leq j\leq m}
\end{array}
\end{gathered}
\end{equation}
of \eqref{19081402} is $m-1$. If $H'$ is an algebraic subgroup such that \eqref{21072803} is the Jacobian of $\log (\mathcal{X}\cap H')$, then $\mathcal{X}\cap H'$ is an anomalous subvariety of $\mathcal{X}$ by Proposition \ref{anomalous3}. As $H\subsetneq H'$, it contradicts the condition on $H$ made in $\clubsuit$. In conclusion, $b_{i1}\neq 0$ for some $1\leq i\leq m$.   
\end{proof}
Without loss of generality, set
\[ \begin{cases} 
b_{i1}\neq 0\quad (1\leq i\leq h),\\
b_{i1}=0\quad (h+1\leq i\leq m).
 \end{cases} \]
By elementary properties of determinants, \eqref{19080501} is equivalent to 
\begin{equation}\label{21072805}
\begin{gathered}
\det\left( \begin{array}{c}
b_{11}\bold{v_0}\\
\frac{b_{21}}{b_{11}}\bold{v_1}+\bold{v_2}\\
\vdots \\  
\frac{b_{h1}}{b_{(h-1)1}}\bold{v_{h-1}}+\bold{v_{h}}\\
\bold{v_{h+1}}\\
 \vdots \\
\bold{v_{m}}
\end{array} \right)=
\det\left( \begin{array}{c}
b_{11}\bold{v_0}\\
b_{21}\bold{v_1}+b_{11}\bold{v_2}\\
\vdots \\  
b_{h1}\bold{v_{h-1}}+b_{(h-1)1}\bold{v_{h}}\\
\bold{v_{h+1}}\\
 \vdots \\
\bold{v_{m}}
\end{array} \right)=0.
\end{gathered}
\end{equation}
If $H'$ is an algebraic subgroup such that the Jacobian of $\log(\mathcal{X}\cap H'$) is as given in \eqref{21072805}, then clearly $H'$ is an algebraic subgroup satisfying $H\subsetneq H'$ and $\mathcal{X}\cap H'$ is an anomalous subvariety of $\mathcal{X}$ by Proposition \ref{anomalous3}. However the existence of $H'$  contradicts the assumption made in $\clubsuit$. This completes the proof of Theorem \ref{21072807}. 
\end{proof}

\subsubsection{$\spadesuit$ $\Longrightarrow$ WGI}

Now we consider the second case $\spadesuit$. 

Let $H'$ be an algebraic subgroup such that $H\subsetneq H'$ and $\mathcal{X}\cap H'$ is an anomalous subvariety of $\mathcal{X}$. We further suppose $H'$ is the largest algebraic subgroup satisfying this property. That is, there is no algebraic subgroup $H''$ containing $H'$ properly and $\mathcal{X}\cap H''$ is an anomalous subvariety of $\mathcal{X}$. By the assumption,  
\begin{equation*}
\mathcal{X}\cap H'=\mathcal{X}\cap (M_{j_1}=\cdots=M_{j_h}=1)
\end{equation*}
for some $\{j_1, \dots, j_h\}\subset \{1, \dots, m\}$ and, without loss of generality, we set $j_k=k$ for $1\leq k\leq h$. By Proposition \ref{20040601}, $H'$ is defined by the following types of equations
\begin{equation*}
\begin{gathered}
M_1^{a_{i1}}L_1^{b_{i1}}\cdots M_{h}^{a_{ih}}L_{h}^{b_{ih}}=1\quad (0\leq i\leq h)
\end{gathered}
\end{equation*}
and so $H$ is given by 
 \begin{equation*}
\begin{aligned}
M_1^{a_{i1}}L_1^{b_{i1}}\cdots M_{h}^{a_{ih}}L_{h}^{b_{ih}}&=1\quad (0\leq i\leq h),\\
M_1^{a_{i1}}L_1^{b_{i1}}\cdots M_m^{a_{im}}L_m^{b_{im}}&=1\quad (h+1\leq i\leq m).
\end{aligned}
\end{equation*}
\begin{theorem}\label{21080805}
Let $\mathcal{M}$, $\mathcal{X}$ and $H$ be the same as in Theorem \ref{21072807}. Suppose $H$ satisfies the assumption in $\spadesuit$ and $H'$ is an algebraic subgroup containing $H$ as given above. Then cusps $1, \dots, h$ of $\mathcal{M}$ are WGI from cusps $m+1, \dots, n$ of $\mathcal{M}$.  
\end{theorem}

\begin{proof}
Recall from Lemma \ref{20040906} that there exists an analytic function $\Theta(s_1, \dots, s_m)$ satisfying 
\begin{equation*}
a_{01}u_1+b_{01}v_1+\cdots +a_{0h}u_{h}+b_{0h}v_{h}=\Theta(s_1, \dots, s_m)
\end{equation*}
where 
\begin{equation*}\label{19081612}
\begin{aligned}
s_i&=a_{i1}u_1+b_{i1}v_1+\cdots +a_{im}u_m+b_{im}v_m\quad (1\leq i\leq m),
\end{aligned}
\end{equation*}
and $a_{ij}=b_{ij}=0$ for $1\leq i\leq h$ and $h+1\leq j\leq m$. Also note that the Jacobian of $\log(\mathcal{X}\cap H)$ at $(0,\dots, 0)$ is of the following form 
\begin{equation}\label{19081602}
\left(\begin{array}{c|c}
\left(a_{ij} + b_{ij} \tau_{j}\right)_{1 \leq i,j \leq h} & 0 \\
\hline
* & \left(a_{ij} + b_{ij} \tau_j\right)_{h+1 \le i,j \le m}
\end{array}\right)
\end{equation}
where 
\begin{equation}\label{19082704}
A:=\begin{array}{c}
(a_{ij}+b_{ij}\tau_{j})_{1\leq i,j\leq h}
\end{array}\quad \text{and}\quad \begin{array}{c}
(a_{ij}+b_{ij}\tau_{j})_{h+1\leq i,j \leq m}
\end{array}
\end{equation}
are invertible.\footnote{If the determinant of $A$ in \eqref{19082704} is $0$, by Proposition \ref{anomalous3}, there exists an algebraic subgroup $H''$ containing $H'$ such that $\mathcal{X}\cap H''$ is an anomalous subvariety of $\mathcal{X}$. But it contradicts the assumption on $H'$. The fact that the determinant of the second matrix in \eqref{19082704} is nonzero follows from \eqref{21081315} and Proposition \ref{anomalous2}. } So if 
\begin{equation*}
\Theta(s_1, \dots, s_m):=e_1s_1+\cdots+e_ms_m+\text{higher degrees},
\end{equation*}
then 
\begin{equation*}
\begin{gathered}
\left( \begin{array}{c}
e_1   \\
\vdots   \\
e_{h}  
\end{array} \right)
=A^{-T}
\left( \begin{array}{c}
a_{01}+b_{01}\tau_1   \\
\vdots   \\
a_{0h}+b_{0h}\tau_{h} 
\end{array} \right)
\end{gathered}
\end{equation*}
and $e_{h+1}=\cdots=e_m=0$. Let $u_{h+1}=\cdots=u_m=0$. By the assumptions on $H'$, applying the same methods presented in the proof of Theorem \ref{21072807}, it is concluded that    
\begin{equation*}
v_i(u_1, \dots, u_{h}, 0, \dots, 0, u_{m+1}, \dots, u_n)\quad (1\leq i\leq h)
\end{equation*}
depends only on $u_1, \dots, u_{h}$. That is, cusps $1, \dots, h$ of $\mathcal{M}$ are WGI from cusps $m+1, \dots, n$. 
\end{proof}

\subsubsection{$l=0$}\label{20040907}
Lastly we verify $l=0$ in \eqref{20050403}. Once it is shown, the proof of Theorem \ref{19082701} will be completed by combining it with Theorems \ref{21072807} and \ref{21080805}. To prove $l=0$ in \eqref{20050403}, it is enough to demonstrate $\Theta$ in \eqref{19082903} is independent of $t_1, \dots, t_l$. Indeed, if $\Theta$ depends only on $s_1, \dots, s_m$, it means an analytic set defined by 
\begin{equation*}
\begin{gathered}
\zeta_i=a_{i1}u_1+b_{i1}v_1+\cdots +a_{im}u_m+b_{im}v_m
\end{gathered}
\end{equation*}
where $\zeta_i\in \mathbb{C} (1\leq i\leq m)$ and $\zeta_0=\Theta(\zeta_1, \dots, \zeta_m)$ is an analytic subset of $\log \mathcal{X}$ of dimension $n-m$. Said differently, a translation of an algebraic subgroup $H^{(m+1)}$ defined by 
\begin{equation*}
\begin{gathered}
M_1^{a_{i1}}L_1^{b_{i1}}\cdots M_m^{a_{im}}L_m^{b_{im}}=1,\quad (0\leq i\leq m)
\end{gathered}
\end{equation*}
contains an anomalous subvariety of $\mathcal{X}$ of dimension $n-m$. Since $H\subset H^{(m+1)}$ and each $\mathcal{X}\cap gH$ in \eqref{20050403} is a maximal anomalous subvariety of $\mathcal{X}$ of dimension $n-m-l$, $l=0$ follows.  

Now we state
\begin{proposition}\label{20040905}
$\Theta$ in \eqref{19082903} is independent of $t_1, \dots, t_l$. That is, $\Theta$ depends only on $s_1, \dots, s_m$. 
\end{proposition}

The key idea of the proof of the proposition essentially lies in the properties of $v_i$ exhibited in Theorem~\ref{potential}. According to the theorem, the degree of $u_j$ in each term of $v_i$ is \textit{even} for $i\neq j$, which implies the degree of each $u_j$ with $j>m$ in every term on the left-hand side of \eqref{19082903} must be \textit{even}. However, we show that if $\Theta$ depends on $t_i$, this is impossible. That is, the right-hand side of \eqref{19082903} necessarily contains a term in which the degree of $u_j$, for some $j>m$, is odd under the assumption. 

To make the proof simpler and easier, let us first change variables and rephrase Proposition~\ref{20040905} accordingly as follows.

By \eqref{21072901}, the Jacobian matrix of $s_1, \dots,s_m, t_1, \dots t_l$ at $(u_1, \dots, u_n)=(0, \dots, 0)$ is 
\begin{equation}\label{22031301}
\left(\begin{array}{c|c}
\bigl(a_{ij} + b_{ij} \tau_j\bigr)_{1 \leq i,j \leq m} & 0 \\
\hline
\bigl(a'_{ij} + b'_{ij} \tau_j\bigr)_{1 \leq i \leq l,\,1 \leq j \leq m} & \bigl(a'_{ij} + b'_{ij} \tau_j\bigr)_{1 \leq i \leq l,\,m+1 \leq j \leq n}
\end{array}
\right)
\end{equation}
By the assumption on $H^{(m)}$ (see the remark after Proposition \ref{anomalous3}),
\begin{equation*}\label{20042601}
\begin{gathered}
A:=\begin{array}{c}
(a_{ij}+b_{ij}\tau_j)_{1\leq i,j\leq m} 
\end{array}
\end{gathered}
\end{equation*}
is invertible, so if   
\begin{equation}\label{22031401}
\begin{gathered}
\left( \begin{array}{c}
x_1 \\
\vdots    \\
x_m    \\
\end{array} \right):=A^{-1}
\left( \begin{array}{c}
s_1  \\
\vdots    \\
s_m  \\
\end{array} \right),
\end{gathered}
\end{equation}
then each $x_i$ ($1\leq i\leq m$) is of the form
\begin{equation}\label{19090809}
\begin{gathered}
x_i=u_i+\text{higher degrees}.
\end{gathered}
\end{equation}
Adding linear combinations of $x_1, \dots, x_m$ to each $t_i$ ($1\leq i\leq l$) if necessary, we further suppose the lower left block matrix in \eqref{22031301} is zero, i.e.,   
\begin{equation}\label{22031803}
\begin{gathered}
a'_{ij}=b'_{ij}=0
\end{gathered}
\end{equation}
for $1\leq i\leq l, 1\leq j\leq m$. Since 
\begin{equation*}
\mathcal{X}\cap H\not\subset (M_k=1)
\end{equation*}
for $m+1\leq k\leq n$, equivalently, 
\begin{equation}\label{22031601}
\bold{e_k}\notin R(A')
\end{equation}
where $R(A')$ is the row vector space of $A':=\begin{array}{c}
(a'_{ij}+b'_{ij}\tau_{j})_{1\leq i\leq l,\; m+1\leq j\leq n}
\end{array}$ and $\bold{e_k}$ is a unit $1\times(n-m)$ matrix whose $k$-th entry is $1$. By \eqref{22031601}, we therefore find an invertible $(l\times l)$-matrix $L$ such that, for 
\begin{equation}\label{19081404}
\left(
\begin{array}{c}
y_1\\
\vdots\\
y_l
\end{array}
\right) 
:=L\left(
\begin{array}{c}
t_1\\
\vdots\\
t_l
\end{array}
\right), 
\end{equation}
$\left(
\begin{array}{c}
y_1\\
\vdots\\
y_l
\end{array}\right)$ is given as
\begin{equation}\label{22031801}
\left(
\begin{array}{c}
\sum_{j=m_1}^{n_1} c_{1j}u_{j}+\text{higher degrees}\\
\vdots\\
\sum_{j=m_l}^{n_l} c_{lj}u_{j}+\text{higher degrees}
\end{array}
\right)
\end{equation}
where the coefficients $c_{ij}$ satisfy the following
\begin{equation}\label{21081001}
\begin{aligned}
\bullet\;\;  &m_i<n_i\;\; \text{and} \;\;c_{im_i}, c_{in_i}\neq 0\;\;\text{for}\;\;1\leq i\leq l; \qquad \qquad \qquad \qquad \qquad \qquad \qquad\qquad \\
\bullet\;\;  &m+1\leq m_1<\cdots<m_l\;\; \text{and} \;\; n_1<\cdots<n_l\leq n. 
\end{aligned}
\end{equation}

In conclusion, by changing variables from $s_1, \dots, s_m, t_1, \dots, t_l$ to $x_1, \dots, x_m, y_1, \dots, y_l$ via \eqref{22031401}, \eqref{22031803} and \eqref{19081404}, the matrix in \eqref{22031301} is transformed into the following $(m+l)\times n$ matrix in row-echelon form
\[
\left(
\begin{array}{c|c}
I_m & 0 \\
\hline
0 &
\begin{array}{cccccc}
c_{1m_1} & \hdots & c_{1n_1} & 0 & \hdots & 0 \\
\vdots   & \ddots & \vdots   & \vdots & \ddots & \vdots \\
0 & \hdots & c_{l m_l} & \hdots & c_{l n_l} & 0
\end{array}
\end{array}
\right),
\]
which is the Jacobian of $x_1, \dots, x_m, y_1, \dots, y_l$ at $(u_1, \dots, u_n)=(0, \dots, 0)$. By abuse of notation, we rewrite \eqref{19082903} as 
\begin{equation}\label{21080801}
a_{ 01}u_1+b_{01}v_1+\cdots +a_{0m}u_m+b_{0m}v_m=\Theta(x_1, \dots, x_m, y_{1},\dots, y_{l}).
\end{equation}

Now Proposition \ref{20040905} is equivalent to 
\begin{proposition}\label{21080803}
$\Theta$ in \eqref{21080801} is independent of $y_1, \dots, y_l$. That is, $\Theta$ in \eqref{21080801} depends only on $x_1, \dots x_m$. 
\end{proposition}

Recall that the key strategy of the proof is to show that when $\Theta$ depends on $t_1, \dots, t_l$, it must, as a function of $u_1, \dots, u_n$ (via \eqref{19090809} and \eqref{22031801}), contain a term in which the exponent of some $u_j$ with $j>m$ is \textit{odd}.\footnote{This contradicts the fact that, in every term of $a_{01}u_1 + b_{01}v_1 + \cdots + a_{0m}u_m + b_{0m}v_m$, the exponent of each such $u_k$ must be even.} 

Now, once we have changed variables as above, it becomes much easier to develop the argument. Since each $y_i$ contains at least two linear terms in the variables $u_j$ (with $m+1 \le j \le n$), it is intuitively clear that the expansion of any monomial in $y_i$, viewed as a function of the $u_j$, contains a term satisfying the required condition. A rigorous proof of this fact, however, requires more laborious arguments and will be given using induction; see Claim~\ref{21080703} below.

\begin{proof}[Proof of Proposition \ref{21080803}]
Let
\begin{equation}\label{20050301}
\Theta(x_1, \dots, x_m, y_1, \dots, y_l):=\sum^{\infty}_{(\alpha_1, \dots, \alpha_m, \beta_1, \dots, \beta_l)\in\mathbb{Z}^{m+l}}c_{\alpha_1, \dots, \alpha_m, \beta_1, \dots, \beta_l}x_1^{\alpha_1}\cdots x_m^{\alpha_m}y_1^{\beta_1}\cdots y_l^{\beta_l}, 
\end{equation}
and $S$ be the set of all monomials $x_1^{\alpha_1}\cdots x_m^{\alpha_m}y_1^{\beta_1}\cdots y_l^{\beta_l}$ in \eqref{20050301} satisfying 
\begin{itemize}
\item $c_{\alpha_1, \dots, \alpha_m, \beta_1, \dots, \beta_l}\neq0$ with $(\alpha_1, \dots, \alpha_{m})\neq (0, \dots, 0)$ and $(\beta_{1}, \dots, \beta_{l})\neq (0, \dots, 0)$;
\item $\alpha_1+\cdots +\alpha_{m}$ is minimal.
\end{itemize}
Fix $(\alpha_1, \dots, \alpha_{m})$, and let $T_{(\alpha_1, \dots, \alpha_{m})}\subset S$ be the set of monomials $x_1^{\alpha_1}\cdots x_m^{\alpha_m}y_1^{\beta_1}\cdots y_l^{\beta_l}$ satisfying 
\begin{itemize}
\item every element in $T_{(\alpha_1, \dots, \alpha_{m})}$ is divisible by $x_1^{\alpha_1}\cdots x_m^{\alpha_m}$;
\item $\beta_{1}+\cdots+\beta_{l}$ is minimal.
\end{itemize} 
Define the subseries $\Theta_{T_{(\alpha_1, \cdots, \alpha_{m})}}$ of $\Theta$ by\footnote{By the definition of $T_{(\alpha_1, \cdots, \alpha_{m})}$, $\Theta_{T_{(\alpha_1, \dots, \alpha_{m})}}$ contains all the terms of $\Theta$ of the smallest degree and divisible both by $u_1^{\alpha_1} \cdots u_m^{\alpha_m}$ and some $u_j$ where $m+1\leq j \leq n$.}
\begin{equation*}
\sum_{x_1^{\alpha_1}\cdots x_m^{\alpha_m}y_1^{\beta_1}\cdots y_l^{\beta_l}\in T_{(\alpha_1, \dots, \alpha_{m})}} c_{\alpha_1, \dots, \alpha_m,  \beta_1, \dots, \beta_l}x_1^{\alpha_1}\cdots x_m^{\alpha_m} y_1^{\beta_1}\cdots y_l^{\beta_l}.
\end{equation*}
By \eqref{19090809} and \eqref{22031801}, if $\Theta_{T_{(\alpha_1, \dots, \alpha_{m})}}$ is represented as a function of $u_1, \dots, u_n$, then the following 
\begin{equation*}
\sum_{x_1^{\alpha_1}\cdots x_m^{\alpha_m}y_1^{\beta_1}\cdots y_l^{\beta_l}\in T_{(\alpha_1, \dots, \alpha_{m})}}c_{\alpha_1, \dots, \alpha_m,  \beta_1, \dots, \beta_l} u_1^{\alpha_1}\cdots u_m^{\alpha_m}\Big(\small \sum_{j=m_1}^{n_1} c_{1j}u_{j}\Big)^{\beta_1}\cdots \Big(\small \sum_{j=m_l}^{n_l} c_{1j}u_{j}\Big)^{\beta_l}
\end{equation*}
is the collection of the leading terms, which we denote by $\Theta_{T_{(\alpha_1, \dots, \alpha_{m})}}^{\mathrm{lead}}$. Since the degree of $u_j$ ($m+1\leq j\leq n$) in each term of 
\begin{equation*}
a_{01}u_1+b_{01}v_1+\cdots +a_{0m}u_m+b_{0m}v_m
\end{equation*}
is \textit{even} by Theorem \ref{potential}, the same property must be true for $\Theta$, $\Theta_{T_{(\alpha_1, \dots, \alpha_{m})}}$ and $\Theta_{T_{(\alpha_1, \dots, \alpha_{m})}}^{\mathrm{lead}}$. However, we show this is impossible in the following claim.  \\

\noindent\textbf{Convention.} For simplicity, we call a monomial of the form
\begin{equation*}
u_{1}^{\alpha_1}\cdots u_{n}^{\alpha_n}
\end{equation*}
\textit{odd} if at least one exponent $\alpha_i$ is odd. 

\begin{claim}\label{21080703}
Let 
\begin{equation*}
\begin{gathered}
z_1:=\sum_{j=m_1}^{n_1} c_{1j}u_{j}, \quad \dots, \quad z_l:=\sum_{j=m_l}^{n_l} c_{lj}u_{j}.
\end{gathered}
\end{equation*}
For $\beta\in \mathbb{N}$, any nontrivial linear combination of elements in 
\begin{equation}\label{19080105}
\{z_1^{\beta_1}\cdots z_l^{\beta_l}\;|\;\beta_1+\cdots +\beta_l=\beta\},
\end{equation}
viewed as a polynomial in $u_j$ ($m+1\leq j\leq n$), contains at least one odd monomial.  
\end{claim}
\begin{proof}[Proof of the claim]
We prove by induction on $l$. For $l=1$, clearly $z_1^{\beta}=\big(\sum_{j=m_1}^{n_1} c_{1j}u_{j}\big)^{\beta}$ contains an odd term as $z_1$ has at least two non-trivial terms (i.e. $u_{m_1}$ and $u_{n_1}$)  by the assumptions given in \eqref{21081001}. 

Suppose $l\geq 2$ and the claim is true for $1, \dots, l-1$. Let
\begin{equation}\label{21080701}
\sum_{\beta_1+\cdots +\beta_l=\beta} \tilde{c}_{\beta_1,\dots \beta_l} z_1^{\beta_1}\cdots z_l^{\beta_l}\quad (l\geq 2)
\end{equation}
be any nontrivial linear sum of elements in \eqref{19080105}. 

\begin{enumerate}
\item First, assume $\tilde{c}_{\beta,0, \dots, 0}\neq 0$ in \eqref{21080701}. If $\beta$ is odd, by \eqref{21081001}, $u_{m_1}^{\beta}$ is a non-trivial odd term appearing only in $z_1^{\beta}$, thus it appears in \eqref{21080701} as well. For $\beta$ even, we split it into the following two cases. 
\begin{enumerate}
\item If the coefficient of $z_1^{\beta-1}z_j$ in \eqref{21080701} is non-zero for some $j$ ($2\leq j\leq l$), let $k$ be the largest such $j$. Then $u_{m_1}^{\beta-1}u_{n_k}$ is odd and found only in $z_1^{\beta-1}z_k$ (again by \eqref{21081001}), hence \eqref{21080701} possesses the desired property.  
\item If the coefficient of $z_1^{\beta-1}z_j$ in \eqref{21080701} is zero for every $j$ ($2\leq j\leq l$), then an odd monomial $u_{m_1}^{r-1}u_{n_1}$ appears only in $z_1^{\beta}$, so it does in \eqref{21080701} as well. 
\end{enumerate}

\item Now suppose $\tilde{c}_{\beta,0, \dots, 0}= 0$ in \eqref{21080701}. By induction, for each $\gamma$ ($0\leq \gamma<\beta$), any linear combination of elements in $\{z_2^{\beta_2}\cdots z_{l}^{\beta_{l}}\;|\;\beta_2+\cdots +\beta_{l}=\beta-\gamma\}$, when represented as a polynomial of $u_j$ ($m+1\leq j\leq n$), has an odd term. Hence, for each $\gamma$ $(0\leq \gamma <\beta)$, a linear sum of any elements in 
\begin{equation*}
\mathcal{Z}_{\gamma}:=\{z_1^{\beta_1}\cdots z_l^{\beta_l}\;|\;\beta_1+\cdots +\beta_{l}=\beta,\quad \beta_1=\gamma\}, 
\end{equation*}
contains an odd term as well divisible by $u_{m_1}^{\gamma}$. This further implies a linear combination of elements in $\bigcup_{\gamma=0}^{\beta-1} \mathcal{Z}_{\gamma}$, when expressed as a polynomial in $u_{j}$ ($m+1\leq j\leq n$), has a non-trivial odd term. 
\end{enumerate}
This completes the proofs of Claim \ref{21080703} as well as Propositions \ref{20040905}-\ref{21080803}. 
\end{proof}
\end{proof}

\section{Main Result III}
\subsection{Verification of Theorem \ref{20042301}}\label{23080901}

In this subsection, we prove our last main result, Theorem \ref{20042301}.  For the reader’s convenience, we restate the theorem below. \\
\\
\textbf{Theorem \ref{20042301}.} \textit{Let $\mathcal{M}$ be a $2$-cusped hyperbolic $3$-manifold and $\mathcal{X}$ be its holonomy variety. If $\mathcal{X}^{oa}=\emptyset$, then either one of the following holds:
\begin{enumerate}
\item two cusps of $\mathcal{M}$ are SGI; 
\item there exists a two variable polynomial $f$ such that $\mathcal{X}$ is defined by 
\begin{equation}\label{20042304}
\begin{gathered}
f(M_1^{a}L_1^{b}M_2^{c}L_2^{d}, M_1^{md}M_2^{mb})=0, \quad f(M_1^{a}L_1^{b}M_2^{-c}L_2^{-d}, M_1^{md}M_2^{-mb})=0 
\end{gathered}
\end{equation}
for some $a,b,c,d\in \mathbb{Z}$ and $m\in \mathbb{Q}$ satisfying $mbd\neq 0$. 
\end{enumerate}}
The proof of the above theorem is based on Theorem \ref{struc} as well as various symmetric properties of $\log \mathcal{X}$ given in Theorem \ref{potential}. 

\begin{proof}
By Theorem \ref{20033003}, it is enough to show that if $\mathcal{M}$ has rationally dependent cusp shapes and the two cusps of $\mathcal{M}$ are not SGI from each other, then $\mathcal{X}$ is defined by equations given in \eqref{20042304}. 

Since $\mathcal{X}^{oa}=\emptyset$, by Theorem \ref{struc}, there exists an irreducible algebraic subgroup $H$ such that $\mathcal{X}$ is foliated by maximal anomalous subvarieties contained in   
\begin{equation*}
\bigcup_{g\in \mathcal{Z}_H}\mathcal{X}\cap gH.
\end{equation*}
Let $H$ be defined by 
\begin{equation}\label{20031601}
\begin{gathered}
M_1^{a_1}L_1^{b_1}M_2^{c_1}L_2^{d_1}=M_1^{a_2}L_1^{b_2}M_2^{c_2}L_2^{d_2}=1.
\end{gathered}
\end{equation}
By changing basis if necessary, we assume \eqref{20031601} is of the following form
\begin{equation*}
\begin{gathered}
M_1^{a_1}L_1^{b_1}M_2^{c_1}L_2^{d_1}=M_1^{a_2}M_2^{c_2}=1.
\end{gathered}
\end{equation*}
Then $\mathcal{X}\cap H$ is locally biholomorphic to 
\begin{equation*}
\begin{gathered}
a_1u_1+b_1v_1+c_1u_2+d_1v_2=a_2u_1+c_2u_2=0.
\end{gathered}
\end{equation*}
 If $\mathcal{X}\cap gH$ is an anomalous subvariety of $\mathcal{X}$ for infinitely many $g\in \mathcal{Z}_H$, equivalently, the following equations
\begin{equation*}
a_1u_1+b_1v_1+c_1u_2+d_1v_2=\xi_1, \quad a_2u_1+c_2u_2=\xi_2
\end{equation*}
define a $1$-dimensional complex manifold for infinitely many $\xi_1, \xi_2\in\mathbb{C}$. 
Thus there exists a holomorphic function $h$ such that  
\begin{equation}\label{19090101}
a_1u_1+b_1v_1+c_1u_2+d_1v_2=h(a_2u_1+c_2u_2).
\end{equation}
If $b_1=0$ (resp. $d_1=0$), then one can easily check that $d_1=0$ (resp. $b_1=0$) and 
\begin{equation*}
a_1u_1+c_1u_2=m(a_2u_1+c_2u_2)
\end{equation*}
for some $m\in \mathbb{Q}\backslash\{0\}$. But this contradicts the fact that $H$ is an algebraic subgroup of dimension $2$. Without loss of generality, we assume $b_1, d_1\neq 0$, and claim
\begin{claim}
\begin{equation*}
(c_2, a_2)=m(b_1, d_1)
\end{equation*}
for some $m\in\mathbb{Q}\backslash \{0\}$. 
\end{claim} 
\begin{proof}[Proof of the claim]
Let $h(t)$ in \eqref{19090101} be defined by $\sum^{\infty}_{i=1}e_it^{2i-1}$ and so 
\begin{equation}\label{19072805}
\begin{aligned}
a_1u_1+b_1v_1+c_1u_2+d_1v_2&=\sum^{\infty}_{i=1}e_i(a_2u_1+c_2u_2)^{2i-1}\\
&=\sum^{\infty}_{i=1} e_i\Bigg(\sum^{2i-1}_{j=0}\binom{2i-1}{j}a_2^{2i-1-j}c_2^ju_1^{2i-1-j}u_2^{j}\Bigg).
\end{aligned}
\end{equation}
By Theorem \ref{potential}, since the degree of $u_i$ (resp. $u_{i+1}$) in every term of $v_i$ is odd (resp. even), we split \eqref{19072805} as follows:
\begin{equation}\label{21072401}
\begin{gathered}
a_1u_1+b_1v_1=\sum^{\infty}_{i=1}e_i\Bigg(\sum^{2i-2}_{j=0, even}\binom{2i-1}{j}a_2^{2i-1-j}c_2^ju_1^{2i-1-j}u_2^{j}\Bigg),\\
c_1u_2+d_1v_2=\sum^{\infty}_{i=1}e_i\Bigg(\sum^{2i-2}_{j=0, even}\binom{2i-1}{j}
c_2^{2i-1-j}a_2^ju_2^{2i-1-j}u_1^{j}\Bigg).
\end{gathered}
\end{equation}
As $\dfrac{1}{2}\dfrac{\partial\Phi}{\partial u_1}=v_1$ and $\dfrac{1}{2}\dfrac{\partial\Phi}{\partial u_2}=v_2$ by Theorem \ref{potential}, we get 
\begin{equation}\label{20031602}
\frac{1}{b_1}\frac{1}{2i-j}e_i\binom{2i-1}{j}a_2^{2i-1-j}c_2^ju_1^{2i-j}u_2^{j}=\frac{1}{d_1}\frac{1}{2i-k}e_i\binom{2i-1}{k}c_2^{2i-1-k}a_2^{k}u_2^{2i-k}u_1^{k}
\end{equation}
for all $i, j, k$ such that $j+k=2i\;(\geq 4)$ from \eqref{21072401}. Now \eqref{20031602} implies
\begin{equation*}
\begin{aligned}
\frac{1}{b_1}\frac{1}{2i-j}\binom{2i-1}{j}a_2^{2i-1-j}c_2^j&=\frac{1}{d_1}\frac{1}{2i-k}\binom{2i-1}{k}c_2^{2i-1-k}a_2^{k}\\
\Longrightarrow
\frac{1}{k}\binom{2i-1}{j}d_1c_2&=\frac{1}{j}\binom{2i-1}{k}b_1a_2\\
\Longrightarrow 
\frac{1}{k}\frac{1}{j!(k-1)!}d_1c_2&=\frac{1}{j}\frac{1}{k!(j-1)!}
b_1a_2\quad (\text{by }j+k=2i)\\
\Longrightarrow
d_1c_2&=b_1a_2.
\end{aligned}
\end{equation*}
This completes the proof of the claim. 
\end{proof}
By Theorem \ref{potential}, the degree of $u_1$ (resp. $u_2$) in each term of $v_2$ is even (resp. odd) and so \eqref{19072805} implies 
\begin{equation}\label{20031605}
a_1u_1+b_1v_1-c_1u_2-d_1v_2=\sum^{\infty}_{i=1}e_i(a_2u_1-c_2u_2)^{2i-1}=h(a_2u_1-c_2u_2).
\end{equation}
Let $\mathcal{C}:=\mathcal{X}\cap (M_2=L_2=1)$ and $\mathcal{C}'$ be the image of $\mathcal{C}$ under the following transformation:
\begin{equation*}
M'_1:=M_1^{a_1}L_1^{b_1}, \quad L'_1:=M_1^{a_2}.
\end{equation*} 
By projecting onto the first two coordinates if necessary, we consider $\mathcal{C}'$ as an algebraic curve in $\mathbb{G}^2(:=(M'_1, L'_1))$. Let $f(M'_1, L'_1)=0$ be the defining equation of $\mathcal{C}'$, which is, near $(1,1)$, locally biholomorphic to $v'_1=h(u'_1)$ where $u'_1:=\log M'_1, v'_1:=\log L'_1$. Then \eqref{19090101} (resp. \eqref{20031605}) is equivalent to 
\begin{equation}\label{20042603}
f(M_1^{a_1}L_1^{b_1}M_2^{c_1}L_2^{d_1}, M_1^{a_2}M_2^{c_2})=0 \quad (\text{resp. }f(M_1^{a_1}L_1^{b_1}M_2^{-c_1}L_2^{-d_1}, M_1^{a_2}M_2^{-c_2})=0).
\end{equation} 
Since $v_1$ and $v_2$ are determined by \eqref{19090101} and \eqref{20031605}, $\mathcal{X}$ is defined by the two equations in \eqref{20042603}. 
\end{proof}

\subsection{Application of Theorem \ref{20042301}}\label{exam}

As a corollary of the above theorem, we have the following, which provides a useful criterion to check whether $\mathcal{X}^{oa}=\emptyset$ or not for the holonomy variety $\mathcal{X}$ of a $2$-cusped hyperbolic $3$-manifold:
\begin{corollary}\label{23081005}
Let $\mathcal{M}$ be a $2$-cusped hyperbolic $3$-manifold whose cusps are not SGI and $\mathcal{X}$ be its holonomy variety. For the Neumann-Zagier potential function $\mathcal{M}$ given as 
\begin{equation}\label{23081001}
\Phi(u_1, u_2):=\sum_{i,j:even} c_{i,j}u_1^iu_2^j, 
\end{equation}
if $36c_{4,0}c_{0,4}\neq c_{2,2}^2$, then $\mathcal{X}^{oa}\neq \emptyset$; that is, $\mathcal{X}$ contains only finitely many anomalous subvarieties. 
\end{corollary}
\begin{proof}
On the contrary, we assume $\mathcal{X}^{oa}=\emptyset$. Since two cusps of $\mathcal{M}$ are not SGI from each other, we fall into the second case of Theorem \ref{20042301}. That is, if $\mathcal{X}$ is defined by equations as given in \eqref{20042304}. Equivalently, by the proof of Theorem \ref{20042301}, this implies 
\begin{equation}\label{23081003}
a_1u_1+b_1v_1+c_1u_2+d_1v_2=\sum^{\infty}_{i=1}e_i(mb_1u_1+md_1u_2)^{2i-1}, 
\end{equation} 
for some $e_i\in \mathbb{C}$. Since
\begin{equation*}
\begin{gathered}
v_1=\frac{1}{2}\frac{\partial \Phi}{\partial u_1}=\frac{i}{2}\sum_{i,j\;:\:even} c_{i,j}u_1^{i-1}u_2^j, \quad 
v_2=\frac{1}{2}\frac{\partial \Phi}{\partial u_2}=\frac{j}{2}\sum_{i,j\;:\:even} c_{i,j}u_1^{i}u_2^{j-1}
\end{gathered}
\end{equation*}
by \eqref{23081001}, combining with \eqref{23081003}, it follows that
\begin{equation*}
\begin{aligned}
b_1(2c_{4,0}u_1^{3}+c_{2,2}u_1u_2^2)+d_1(2c_{0,4}u_2^{3}+c_{2,2}u_1u_2^2)&=e_2(mb_1u_1+md_1u_2)^3\\
&=e_2m^3(b_1^3u_1^3+3b_1^2d_1u_1^2u_1+3b_1d_1^2u_1u_2^2+d_1^3u_2^3)\\
\end{aligned}
\end{equation*}
implying $c_{4,0}=\frac{e_2m^3b_1^2}{2}$, $c_{0,4}=\frac{e_2m^3d_1^2}{2}$ and $c_{2,2}=3e_2m^3b_1d_1$. In conclusion, $36c_{4,0}c_{0,4}= c_{2,2}^2$ follows. 
\end{proof}

The above corollary is very practical and applies broadly to any $2$-cusped hyperbolic $3$-manifold whose two cusps are not SGI to each other. In fact, we have applied the criterion in the corollary to more than a hundred manifolds in the SnapPy census and verified that they all satisfy the condition.

If there exists a $2$-cusped manifold whose Neumann–Zagier potential function has vanishing terms of homogeneous degree $4$, then one can analyze the coefficients of the next nonvanishing terms and derive an analogous criterion, applying it accordingly. (However, among the two-cusped hyperbolic $3$-manifolds in the SnapPy census \cite{CDGW}, if the cusps are not SGI to each other, then most have nonvanishing terms of homogeneous degree $4$ in their potential functions. The only exception we have found so far is the manifold $v2788$. See \cite{jeon6} and \cite{JO1} for more details.)

In what follows, we present a well-known concrete example, for which not only the above corollary applies, but the core techniques of the paper yield a complete analysis of the anomalous subvarieties of its holonomy variety. In \cite{AD}, J. Aaber and N. Dunfield studied the complement of the $(-2,3,8)$-pretzel link $\mathcal{W}$, a sibling of the Whitehead link complement, and computed its Neumann–Zagier potential function as follows:
\begin{equation}\label{23081101}
\Phi(u_1, u_2) = \frac{\sqrt{-1}}{2} u_1^2+\frac{\sqrt{-1}}{2}u_2^2+\frac{-3+\sqrt{-1}}{192}u_1^4-\frac{1+\sqrt{-1}}{32}u_1^2u_2^2+\frac{-3+\sqrt{-1}}{192}u_2^4+\cdots.
\end{equation}

First, as $36\big(\frac{-3+\sqrt{-1}}{192}\big)^2\neq \big(\frac{1+\sqrt{-1}}{32}\big)^2$, the holonomy variety $\mathcal{X}$ of $\mathcal{W}$ satisfies $\mathcal{X}^{oa}\neq \emptyset$ by Corollary \ref{23081005}. That is, $\mathcal{X}$ has only finitely many anomalous subvarieties. 

It is known that the group of self-isometries of $\mathcal{W}$ contains at least four non-trivial elements \cite{AD}: 
$$\{\mathfrak{m_1}\rightarrow \pm \mathfrak{m_2},  \mathfrak{l_1}\rightarrow \pm \mathfrak{l_2}\}\quad \text{and}\quad \{\mathfrak{m_1}\rightarrow \pm \mathfrak{l_2},  \mathfrak{l_1}\rightarrow \mp \mathfrak{m_2}\}.$$ Consequently, following the same analysis given in Section \ref{23080305}, one can check that $\mathcal{X}$ also has at least four anomalous subvarieties contained either in 
$$M_1=M_2^{\pm 1}, L_1=L_2^{\pm 1}\quad \text{or}\quad  M_1=L_2^{\pm 1},L_1=M_2^{\mp 1}.$$ In the proposition below, using \eqref{23081101}, we describe all potential anomalous subvarieties of $\mathcal{W}$ including the aforementioned ones.  

\begin{proposition}\label{23081220}
Let $\mathcal{W}$ and $\mathcal{X}$ be the same as above. Let $H$ be an algebraic subgroup such that $\mathcal{X}\cap H$ is a $1$-dimensional anomalous subvariety of $\mathcal{X}$. Then $H$ is either one of the following eight algebraic subgroups:
\begin{equation}\label{23081210}
\begin{gathered}
(M_i=L_i=1,\;i=1,2), \quad (M_1=M_2^{\pm 1}, L_1=L_2^{\pm 1}),\quad(M_1=L_2^{\pm 1}, L_1=M_1^{\mp 1})\\
(M_1=M_2^{\pm 2}L_2^{\pm 1}, L_1=M_2^{\mp 1}L_2^{\pm 2})\quad \text{and}\quad
(M_1^{5}=M_2^{\pm 2}L_2^{\pm 1}, L_1^5=M_2^{\mp 1}L_2^{\pm 2}).
\end{gathered}
\end{equation}
\end{proposition}
\begin{proof}
Suppose $H$ is defined by 
\begin{equation*}
M_1^{a_j}L_1^{b_j}M_2^{c_j}L_2^{d_j}=1, \quad (j=1,2).
\end{equation*}
We further assume that $H$ is defined by neither $M_1=L_1=1$ nor $M_2=L_2=1$. This implies that both 
$$\det \left(\begin{array}{cc}
a_1 & b_1\\
a_2 & b_2
\end{array}\right)\quad \text{and}\quad \det \left(\begin{array}{cc}
c_1 & d_1\\
c_2 & d_2
\end{array}\right)$$ 
are invertible. To simplify the proof, suppose 
$$\left(\begin{array}{cc}
a_1 & b_1\\
a_2 & b_2
\end{array}\right)=-I\quad\text{and}\quad \left(\begin{array}{cc}
c_1 & d_1\\
c_2 & d_2
\end{array}\right)\in\text{GL}_2(\mathbb{Q}).$$ 
Since $\mathcal{X}\cap H$ is an anomalous subvariety of $\mathcal{X}$, equivalently, it means 
\begin{equation}\label{23081102}
u_1=c_1u_2+d_1v_2, \quad v_1=c_2u_2+d_2v_2
\end{equation}
is an analytic manifold of dimension $1$. As 
\begin{equation*}
v_1=\sqrt{-1}u_1+\frac{-3+\sqrt{-1}}{48}u_1^3-\frac{1+\sqrt{-1}}{16}u_1u_2^2+\cdots
\end{equation*}
by \eqref{23081101}, combining with \eqref{23081102}, the following identity is induced:
\begin{equation}\label{23081103}
\begin{gathered}
\sqrt{-1}(c_1u_2+d_1v_2)+\frac{-3+\sqrt{-1}}{48}(c_1u_2+d_1v_2)^3-\frac{1+\sqrt{-1}}{16}(c_1u_2+d_1v_2)u_2^2+\cdots\\
=c_2u_2+d_2v_2.
\end{gathered}
\end{equation}
Using the first equation in \eqref{23081102} along with  
\begin{equation*}
\begin{gathered}
v_2=\sqrt{-1}u_2+\frac{-3+\sqrt{-1}}{48}u_2^3-\frac{1+\sqrt{-1}}{16}u_2u_1^2+\cdots
\end{gathered} 
\end{equation*}
(again derived from \eqref{23081101}), if $u_1$ is represented as a function of $u_2$, it is of the following form:
\begin{equation}\label{23081201}
u_1=(c_1+d_1\sqrt{-1})u_2+\text{higher terms}.
\end{equation}
Now plugging \eqref{23081201} into \eqref{23081103}, we consider \eqref{23081103} as the identity between two functions of $u_2$ and examine the coefficients of each term in it. 

First, comparing the coefficients of $u_2$ in \eqref{23081103}, it follows that 
\begin{equation*}
\sqrt{-1}(c_1+d_1\sqrt{-1})=c_2+d_2\sqrt{-1}\Longrightarrow c_1=d_2, -d_1=c_2. 
\end{equation*}
Computing the coefficients of $u_2^3$ in \eqref{23081103}, we obtain
\begin{equation*}
\begin{gathered}
d_1\sqrt{-1}\Big(\frac{-3+\sqrt{-1}}{48}-\frac{1+\sqrt{-1}}{16}z^2\Big)+\frac{-3+\sqrt{-1}}{48}z^3-\frac{1+\sqrt{-1}}{16}z\\
=c_1\Big(\frac{-3+\sqrt{-1}}{48}-\frac{1+\sqrt{-1}}{16}z^2\Big)
\end{gathered}
\end{equation*}
where $z=c_1+d_1\sqrt{-1}$, which is simplified to
\begin{equation}\label{26042201}
\begin{aligned}
\frac{-3+\sqrt{-1}}{48}(z^3-\overline{z})=\frac{1+\sqrt{-1}}{16}(z-\overline{z}z^2).
\end{aligned}
\end{equation}
Expanding \eqref{26042201}, we get 
\begin{equation*}
\begin{gathered}
(-1+2\sqrt{-1})\big((c_1^3-3c_1d_1^2-c_1)+(3c_1^2d_1-d_1^3+d_1)\sqrt{-1}\big)\\
=3(-c_1^3-c_1d_1^2+c_1)+3(-c_1^2d_1-d_1^3+d_1)\sqrt{-1}.
\end{gathered}
\end{equation*}
Since $c_1, d_1\in \mathbb{Q}$, it implies
\begin{equation}\label{26042101}
\begin{aligned}
-(c_1^3-3c_1d_1^2-c_1)-2(3c_1^2d_1-d_1^3+d_1)&=3(-c_1^3-c_1d_1^2+c_1)\\
\Longrightarrow 
-c_1^3-3c_1d_1^2+c_1+3c_1^2d_1-d_1^3+d_1&=0
\end{aligned}
\end{equation}
and 
\begin{equation}\label{26042102}
\begin{aligned}
2(c_1^3-3c_1d_1^2-c_1)-(3c_1^2d_1-d_1^3+d_1)&=3(-c_1^2d_1-d_1^3+d_1)\\
\Longrightarrow 
c_1^3-3c_1d_1^2-c_1+2d_1^3-2d_1&=0.
\end{aligned}
\end{equation}
By adding and subtracting \eqref{26042101} and \eqref{26042102}, it follows that 
\begin{equation}\label{26042105}
-6c_1d_1^2+3c_1^2d_1+d_1^3-d_1=-2c_1^3+2c_1+3c_1^2d_1-3d_1^3+3d_1=0. 
\end{equation} 
\begin{enumerate}
\item If $d_1=0$, then $c_1$ is either $0$ or $\pm 1$ by \eqref{26042105}. Thus \eqref{23081102} is equivalent to either $u_1=v_1=0$ or $u_1=\pm u_2, v_1=\pm v_2$ respectively, and $H$ is given by either $M_1=L_1=1$ or $M_1=M_2^{\pm1}, L_1=L_2^{\pm1}$ accordingly. 

\item Suppose $d_1\neq 0$. Then 
\begin{equation}\label{26042107}
d_1^2-1=6c_1d_1-3c_1^2
\end{equation}
by the first equation in \eqref{26042105}. Substituting it into the second equation in \eqref{26042105}, it follows that 
\begin{equation*}
\begin{aligned}
-2c_1^3+2c_1+3c_1^2d_1-3d_1(6c_1d_1-3c_1^2)=c_1(-c_1^2+1+6c_1d_1-9d_1^2)=0. 
\end{aligned}
\end{equation*}
\begin{enumerate}
\item If $c_1=0$, then $d_1=\pm 1$ from \eqref{26042105}. Thus \eqref{23081102} is equivalent to $u_1=\pm v_2, v_1=\mp u_2$, which means $H$ is defined by $M_1=L_2^{\pm 1}, L_1=M_2^{\mp 1}$. 

\item If $c_1\neq 0$, then $-c_1^2+1+6c_1d_1-9d_1^2=0$. Combining this with \eqref{26042107}, we obtain $c_1=\pm 2d_1$. So $(c_1, d_1)$ is either $(\pm 2, \pm1)$ or $(\pm \frac{2}{5}, \pm \frac{1}{5})$, and \eqref{23081102} is either
$$u_1=\pm 2u_2\pm 1v_2, \quad v_1=\mp u_2\pm 2v_2$$
or 
$$u_1=\pm\frac{2}{5}u_2\pm\frac{1}{5}v_2, \quad v_1=\mp\frac{1}{5}u_2\pm\frac{2}{5}v_2$$
respectively. Consequently, $H$ is one of the last two cases in \eqref{23081210}. 
\end{enumerate}
\end{enumerate}
\end{proof}

As explained above and in Section \ref{23080305}, the algebraic subgroups in the first line of \eqref{23081210} clearly generate anomalous subvarieties of $\mathcal{X}$; indeed, these are the only ones that exhibit this property. For the algebraic subgroups in the second line, using the defining equations of $\mathcal{X}$ given in (5.10) of \cite{AD}, we have verified that they do not produce anomalous subvarieties of $\mathcal{X}$.\footnote{For instance, if the defining equations of $\mathcal{X}$ are
$$
F(M_1, M_2, L_1)=F(M_2, M_1, L_2)=0 
$$
where $F$ is the polynomial given in (5.10) of \cite{AD}, one can show that 
$$F(M_2^2L_2, M_2, M_2^{-1}L_2^2)=0\quad \text{and}\quad  F(M_2, M_2^2L_2, L_2)=0$$ 
do not share any common components, implying 
$$\mathcal{X}\cap (M_1=M_2^2L_2,\;L_1=M_2^{-1}L_2^2) $$
is not a $1$-dimensional anomalous subvariety of $\mathcal{X}$.}

As noted earlier, the Whitehead link complement also possesses this property, as does any 2-cusped manifold admitting a self-isometry that swaps the two cusps. In fact, many 2-cusped \textit{arithmetic} 3-manifolds also appear to possess this property. We will explore these examples, along with others, in more detail in \cite{JO1}.

Beyond the $2$-cusped case, the Borromean rings complement and the complements of chain links are classical examples that admit nontrivial symmetries between their components \cite{NR2, rat, thu}. Consequently, their holonomy varieties contain anomalous subvarieties of a similar type, arising from these symmetries, as described in Proposition~\ref{23081220}.

\vspace{5 mm}
Department of Mathematics, POSTECH\\
77 Cheong-Am Ro, Pohang, South Korea\\
\\
\emph{Email Address}: bogwang.jeon@postech.ac.kr
\end{document}